    \newtheorem{theorem}{Theorem}[section]
    \newtheorem{proposition}[theorem]{Proposition}
    \newtheorem{corollary}[theorem]{Corollary}
    \newtheorem{definition}[theorem]{Definition}
    \newtheorem{example}[theorem]{Example}
    \newtheorem{remark}[theorem]{Remark}
\begin{document}
\newcommand{\A}{\mathbb{A}}
\newcommand{\Q}{\mathbb{Q}}
\newcommand{\N}{\mathbb{N}}
\newcommand{\Z}{\mathbb{Z}}
\newcommand{\R}{\mathbb{R}}
\newcommand{\M}{\mathbb{M}}
\newcommand{\tx}[1]{\quad\mbox{#1}\quad}
\newcommand{\Aut}{\mathrm{Aut}}
\newcommand{\Inn}{\mathrm{Inn}}
\newcommand{\Sym}{\mathrm{Sym}}
\newcommand{\LSec}{\mathrm{LSec}}
\newcommand{\RSec}{\mathrm{RSec}}

\begin{frontmatter}
\title{{\bf{  Insights on Stochastic Dynamics for Transmission of Monkeypox: Biological and Probabilistic Behaviour}}\tnoteref{label1}}\tnotetext[label1]{}
\author[swat]{Ghaus ur Rahman}
\ead{dr.ghaus@uswat.edu.pk}
\author[nor,kpi]{Olena Tymoshenko}
\ead{olenaty@math.uio.no}
\author[nor]{Giulia Di Nunno}
\ead{giulian@math.uio.no}

\address[swat]{``Department of Mathematics and Statistics, University of Swat, Pakistan"}
\address[nor]{``Department of Mathematics, University of Oslo, Norway"}
\address[kpi]{``Department of Mathematical Analysis  and Probability Theory, NTUU Igor Sikorsky Kyiv Polytechnic Institute, Ukraine"}
\begin{abstract}
 The transmission of monkeypox is studied using a  stochastic model taking into account the  biological aspects, the contact mechanisms and the demographic factors together with the intrinsic uncertainties.   Our results provide insight into the interaction between stochasticity and biological elements in the dynamics of monkeypox transmission. The rigorous mathematical analysis determines  threshold parameters for disease persistence. For the proposed model, the existence of a unique global almost sure non-negative solution is proven. Conditions leading to disease extinction are established. Asymptotic properties of the model are investigated such as the speed of transmission. 

\end{abstract}
\begin{keyword}
 Monkeypox Virus;~ Stochastic Model;~Extinction; Long-Time Behaviour\\
\end{keyword}
\end{frontmatter}
\section{Introduction}
\noindent The modeling of disease transmission heavily relies on stochastic differential equations (SDEs), which offer a more realistic framework for comprehending the inherent uncertainty involved in the innate variety within a population. The authors of \cite{m1} provided a comprehensive framework for understanding the modeling of infectious diseases, blending both theoretical and practical aspects of epidemiology, while \cite{m2} offers a deep dive into the principles of epidemiological modeling and control strategies. 
Ecological models have been studied using time series analysis \cite{mid1} and tools of stochastic calculus \cite{,mid2,Agarwal,ghaus3}, also see the references therein.

Monkeypox is an extremely contagious zoonotic disease that results from infection by the monkeypox virus, a member of the Orthopoxvirus genus within the Poxviridae family \cite{Durski,Jezek}. Its impact is largely felt in rural regions situated close to tropical rainforests in central and West Africa. However, there is also an increasing number of reported cases in UK and in Europe.  
Rodents are responsible for the spread and transmission of the zoonotic viral disease monkeypox. Humans can either get monkeypox directly or indirectly through rodents (mostly rats) and primates wildlife is major source of this disease.   

Furthermore, it is commonly transmitted between individuals through various modes, such as respiratory droplets, contact with contaminated bodily fluids or objects, as well as through the skin lesions of infected individuals \cite{Alakunle}. After the successful eradication of smallpox, the incidence of the orthopox virus known as monkeypox has surged, making it the most commonly encountered variant \cite{Kantele}.  
The comprehension of the ecological dynamics of rodents is important for controlling the propagation of the virus by the decreasing rodent-human interaction \cite{mid4,mid5}. 
Among various methods in modeling, the compartmental approach provides an invaluable framework for describing and forecasting the dynamics of infectious diseases, like monkeypox. General theory of the compartmental approach can be found in e.g., \cite{Cas,LaSalle,kam, Fima}, and also references therein. 

In the literature, we find some papers modelling monkeyfox transmission, with deterministic perspective. As illustration, \cite{alt1, alt2, p1, p2} explore the matter explaining  
the integer order formulation, as well as the their extensions to the fractional order system by employing $\Psi$-Hilfer fractional derivative and with Mittag-Leffler kernel. 

We also use compartmental modeling.  In comparison to the existing literature on dynamics of monkeypox, our paper contributes to explore complex aspects of the diseases including the uncertainty in the model. Indeed, an epidemic's behaviour is unpredictable due to external environmental disturbances that impact its propagation. 
We propose a compartmental model  where we have two populations.  
The human population is composed of four distinct categories: susceptible individuals $S_h$, infected individuals $I_h$, isolated individuals $Q_h$, and recovered individuals $R_h$. On the other hand, the rodent population can be divided into two groups: susceptible rodents $S_r$ and infected rodents $I_r$.
\begin{figure}[!ht]\label{fig}
\center{\includegraphics[scale=0.35]{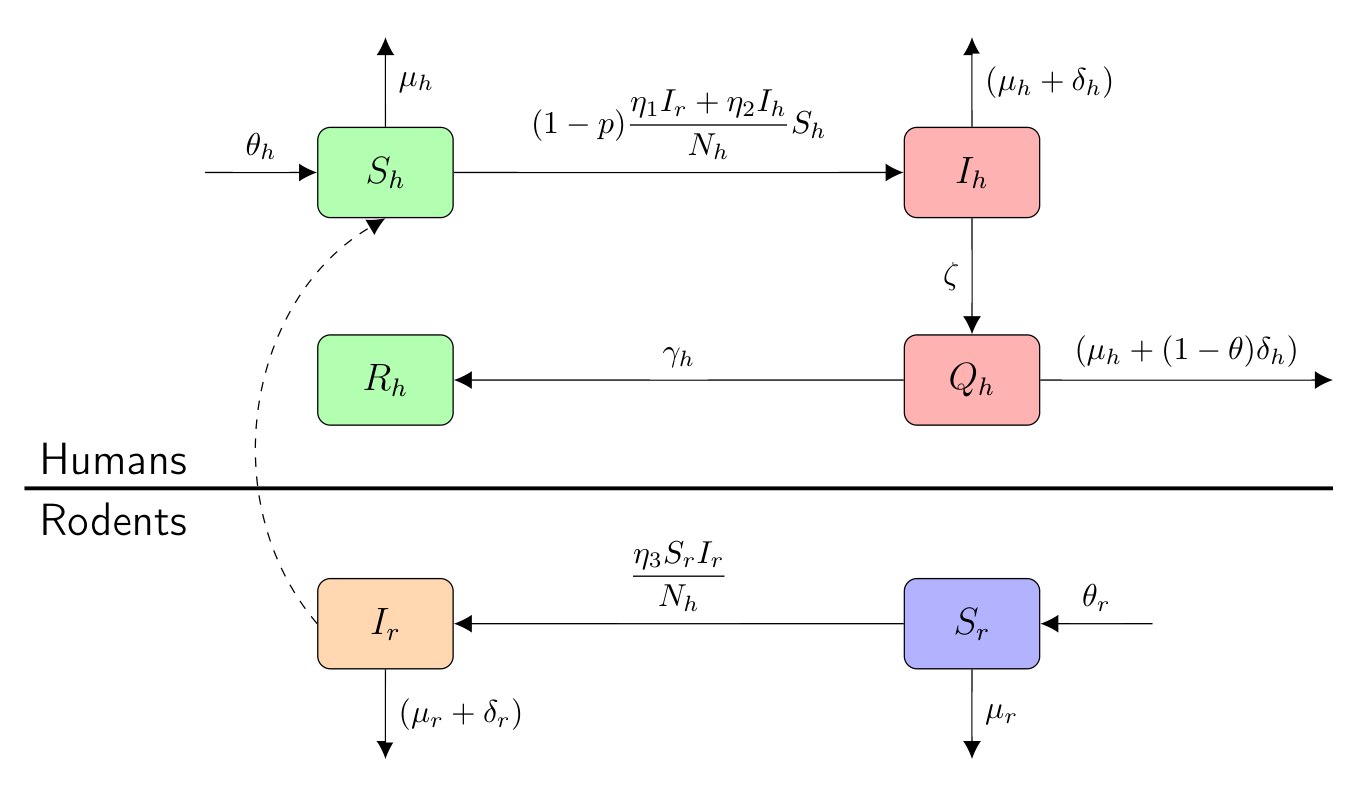}}
\caption{The schematic diagram of Monkey Pox virus transition in the deterministic case}
\end{figure}


Starting from the deterministic model (see Figure \ref{fig}), to be able to account for the uncertainty, we add  stochastic noises modelled by Brownian motions. These act as a perturbation of the system mostly due to the effects of the uncertainty of death and birth rates as well as the imprecise count of individuals. 
Indeed, we consider the following stochastic compartment model for monkeypox: $(S_h~,I_h,~Q_h,~R_h,)$ related to  human population and $(S_r,~I_r)$ for rodents: 
\begin{eqnarray}\label{STD1.1}
dS_h(t)&=& \left(\theta_h-(1-p)\cdot\frac{\eta_1I_r(t)+\eta_2I_h(t)}{N_h(t)}\cdot S_h(t)-\mu_hS_h(t)\right)dt\cr\cr&&-(1-p)\cdot \frac{\sigma_1I_r(t)dB_1(t)+\sigma_2I_h(t)dB_2(t)}{N_h(t)}\cdot S_h(t)+\sigma_3S_h(t)dB_3(t);\cr\cr
dI_h(t)&=&\left((1-p)\cdot \frac{\eta_1I_r(t)+\eta_2I_h(t)}{N_h(t)}\cdot S_h(t)-(\mu_h+\delta_h+\zeta)I_h(t)\right)dt\cr\cr&&+(1-p)\cdot\frac{\sigma_2S_h(t) }{N_h(t)}\cdot I_h(t)dB_2(t)+\sigma_4I_h(t)dB_4(t);\cr\cr
dQ_h(t)&=&\left(\zeta I_h(t)-(\mu_h+\gamma_h+(1-\theta)\delta_h)Q_h(t)\right)dt+\sigma_5Q_h(t)dB_5(t); \cr\cr
dR_h(t)&=&\left(\gamma_hQ_h(t)-\mu_hR_h(t)\right)dt+\sigma_6R_h(t)dB_6(t); \cr\cr
dS_r(t)&=&\left(\theta_r-\frac{\eta_3S_r(t)I_r(t)}{N_r(t)}-\mu_rS_r(t)\right)dt+\sigma_7S_r(t)dB_7(t); \cr\cr
dI_r(t)&=&\left(\frac{\eta_3S_r(t)I_r(t)}{N_r(t)}-(\mu_r+\delta_r)I_r(t)\right)dt+\sigma_8I_r(t)dB_8(t),
\end{eqnarray}
where 
$(S_h(0),I_h(0),Q_h(0),R_h(0),S_r(0)),I_r(0) \in  \mathbb{R}^6_+$ is initial value, $B_i(t)$, $i=1, ..., 8,$ are mutually independent standard  Brownian motions defined on a filtered probabilistic space $(\Omega, \mathbb{F}, \{{\cal F}_t\}_{t\geq0},\mathrm{P})$ such that ${\cal{F}}_0$ contains the null sets, and $\sigma_i, i=1, 2,..., 8,$ are their intensities (or volatilises).
We also denote the involved human population by
$N_h:=S_h+I_h+Q_h+R_h$, and the rodent population by $N_r:=S_r+I_r.$
 Note that in this paper, we  only  consider the case when  
\begin{equation}\label{HR}
    N_r(t)\leq k(t)N_h(t),
\end{equation}
where $ k(t), ~t\geq 0,$ is some continuous positive function with $\bar{k}=\underset{t\geq0}\sup\, k(t).$
Note in particular that the rate of contacts, $\eta_1$ and  $\eta_2$, are the basic parameters that are responsible for monkeypox disease's transmission. 
\noindent The susceptible human class  $S_h$ is generated by a constant recruitment rate $\theta_h$ and reduced by acquiring infection after interaction with infectious humans and rodents. The force of infection is $\eta_1 I_r+\eta_2 I_h$, where $\eta_1$ and $\eta_2$ are the effective contact rates (capable of transmitting infection) corresponding to infectious rodents and humans, respectively. The mortality rate of each class within the human population decreases over time due to natural deaths occurring at a rate of $\mu_h$. Meanwhile, the death rate caused by monkeypox in the infected and isolated human compartments is indicated by $\delta_h$. The infectious human population is isolated and joins the $Q_h$ class at the transmission rate $\zeta$. The recovery rate of the isolated/quarantine human population is denoted by $\gamma_h$.  The value $p$, with $0\leq p\leq 1$, measures the effectiveness of enlightenment campaign,  and $\theta$, with $0 \leq \theta \leq 1$ is the effectiveness of quarantine and
treatment. It is assumed that the death in $Q_h$ due to disease is influenced by the effectiveness of treatment, hence it is $(1 - \theta) \delta_h$ .

In the rodent population, $\theta_r$ shows the recruitment rate while the natural death rate  is $\mu_r$. The infection rate of the virus from an infected rodent to another rodent is denoted as $\eta_3 I_r$, with $\eta_3$ representing the transmission rate. The population of each class of rodents decreases naturally due to a death rate denoted as $\mu_r$, while the death rate caused by monkeypox infection in the infected rodent group is denoted as $\delta_r$. Since  wild rodents do not generically have access to treatment, we  assume that  they do not recover from the disease. A summary of the model elements is presented in Table 1.

\begin{table}[H]
  \centering
  \begin{tabular}{|c|c|}
    \hline
    Model's Elements & Description of Elements  \\
    \hline
$N_h$ & Humans population  \\
$S_h$ & Humans who are susceptible  \\
$I_h$  & Humans who are infected\\
$Q_h$   & Infected Humans who are quarantine\\
$R_h$   &Humans who are Recovered\\
$N_r$ & Rodents population\\
$S_r$   &Rodents who are susceptible\\
$I_r$   & Rodents who are infected \\
$\theta_h$   &Recruitment Rate of Humans\\
$p$   &Effectiveness Public Enlightenment
Campaign\\
$\eta_1$   &Rate of Contact Between Rodents and Humans\\
$\eta_2$   &Rate of Contact Between Humans and Humans\\
$\eta_3$   &Rate of Contact Between Rodents and Rodents\\
$\mu_h$   &The rate at which humans naturally pass away\\
$\delta_h$ &Mortality rate caused by diseases in humans\\
$\zeta$ & Progression Rate from Infected to
Quarantine\\
$\gamma_h$ & Recovery Rate of Humans\\
$\theta$ &Effectiveness of Quarantine and Treatment\\

$\theta_r$ &Rodents growth rate from birth to adult stage\\
$\mu_r$ &The rate at which rodents naturally pass away\\
$\delta_r$ & Mortality rate caused by diseases in rodents\\
    \hline
  \end{tabular}
  \caption{Descriptions of the elements}\label{Table 1}
\end{table}
One described the stochastic model for monkeypox transmission, we proceed with a complete analysis of the existence and positivity of the solution to the stochastic dynamic system (\ref{STD1.1}) in Section 2. 
We study stochastic ultimate boundedness and permanence of the solution in Section 3. Conditions for the disease extinction are detailed in Section 4, where we also provide illustration of the obtained results with numerical examples realized with Python. Section 5 presents an extended model in which the parameters are time varying (non-autonomous system). With this, we study the long-time asymptotic behaviour of the model and give results  on the speed of the transmission. 

\section{Global Existence and Positivity of the Solution}
\noindent  In studying dynamic behavior, it is crucial to establish whether a solution exists globally, i.e., there is no explosion in a finite time. In addition, in the context of epidemic population dynamics, it is also important to assess whether the solutions are non-negative. 

A necessary condition for a stochastic differential equation to have a unique  global solution  for any given initial value, is that the coefficients have the linear growth, and are locally Lipschitz. (e.g. see \cite{Mao}, \cite{Arnold}, \cite{GS}).
 However, although the coefficients of system (\ref{STD1.1}) do not satisfy  linear growth condition due to the bilinear incidence, they are locally Lipschitz continuous. Nonetheless, the following result demonstrates that the solution  of system (\ref{STD1.1}) is exists  globally, it is unique, and non-negative.

\begin{theorem}\label{Global}
The system represented by equation (\ref{STD1.1}) has a unique solution, 
$$\mathcal{X}(t)=(S_h(t),I_h(t),Q_h(t),R_h(t),S_r(t), I_r(t)),$$ on $t \geq 0$ given an initial condition of $\mathcal{X}(0)\in \mathbb{R}^6_+$, and this solution remains in the  space $\mathbb{R}^6_+$ with a probability of 1, namely, $\mathcal{X}(t)\in \mathbb{R}^6_+$ for all $t\geq 0$ almost surely (a.s.).
\end{theorem}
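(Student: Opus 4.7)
The plan is to combine the standard localization technique for SDEs with locally Lipschitz coefficients with a Lyapunov-function estimate that simultaneously controls the blow-up to $+\infty$ of any coordinate and its approach to $0$. Since the coefficients in (\ref{STD1.1}) are locally Lipschitz on $\mathbb{R}^6_+$, there exists a unique maximal strong solution $\mathcal{X}(t)$ up to an explosion time $\tau_e$; the task is to prove $\mathrm{P}(\tau_e = \infty) = 1$ and that $\mathcal{X}(t)\in\mathbb{R}^6_+$ for all $t\geq 0$.

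First, choose $k_0$ such that every component of $\mathcal{X}(0)$ lies in $[1/k_0, k_0]$, and for $k\geq k_0$ define the stopping time
\[
\tau_k = \inf\Bigl\{\, t\in [0,\tau_e)\;:\;\min_{1\le i\le 6} X_i(t)\leq \tfrac{1}{k}\text{ or }\max_{1\le i\le 6} X_i(t)\geq k\,\Bigr\},
\]
with $\inf\emptyset = \infty$. The sequence $\tau_k$ is non-decreasing in $k$ and its limit $\tau_\infty := \lim_{k\to\infty}\tau_k \leq \tau_e$ exists; it suffices to prove $\mathrm{P}(\tau_\infty < \infty) = 0$, which simultaneously yields $\tau_e=\infty$ a.s.\ and the positivity of every coordinate.

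Second, I would introduce the Lyapunov function
\[
V(\mathcal{X}) = \sum_{i=1}^{6}\bigl(X_i - 1 - \ln X_i\bigr),
\]
which is non-negative on $\mathbb{R}^6_+$ and tends to $+\infty$ as soon as one coordinate escapes to $0$ or to $+\infty$. Applying Itô's formula, the generator $LV(\mathcal{X})$ decomposes into two pieces: the sum of the raw drifts of all six equations, which after cancellation of the incidence terms between the $S_h$/$I_h$ and $S_r$/$I_r$ pairs reduces to $\theta_h + \theta_r - \mu_h N_h - \mu_r N_r - \delta_h I_h - (1-\theta)\delta_h Q_h - \delta_r I_r \leq \theta_h + \theta_r$; and the logarithmic corrections arising from Itô's formula applied to $-\ln X_i$. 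Here assumption (\ref{HR}) is essential: since $I_r \leq N_r\leq \bar k\, N_h$ and $I_h, S_h\leq N_h$, every ratio $(\eta_1 I_r + \eta_2 I_h)/N_h$, $S_h/N_h$, $I_r^2/N_h^2$, $I_h^2/N_h^2$ that appears is bounded by a constant depending only on $\bar k$ and the model parameters. Discarding the non-positive contributions $-\theta_h/S_h$, $-\theta_r/S_r$, $-\zeta I_h/Q_h$, $-\gamma_h Q_h/R_h$ and the negative incidence quotients, one obtains $LV(\mathcal{X}) \leq C$ for a constant $C$ independent of $\mathcal{X}$.

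Third, integrating Itô's formula between $0$ and $t\wedge\tau_k$ and taking expectations yields
\[
\mathbb{E}\bigl[V(\mathcal{X}(t\wedge\tau_k))\bigr] \leq V(\mathcal{X}(0)) + C\, T
\]
for every $t\in[0,T]$, the stochastic integrals being genuine martingales after localization by $\tau_k$. On the event $\{\tau_k\leq T\}$ at least one coordinate of $\mathcal{X}(\tau_k)$ equals $k$ or $1/k$, hence $V(\mathcal{X}(\tau_k)) \geq \phi(k) := \min\{k-1-\ln k,\ \tfrac{1}{k}-1+\ln k\}$ with $\phi(k)\to\infty$; a Markov-type inequality then gives $\mathrm{P}(\tau_k\leq T) \leq (V(\mathcal{X}(0)) + CT)/\phi(k) \to 0$ as $k\to\infty$, so $\mathrm{P}(\tau_\infty\leq T) = 0$, and since $T>0$ is arbitrary, $\tau_\infty = \infty$ almost surely. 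The main obstacle is step two: producing a uniform (rather than merely $V$-linear) upper bound on $LV$ requires genuine use of hypothesis (\ref{HR}) to tame the bilinear incidence and the quadratic noise-to-size ratios, and this is the only place where non-trivial algebra is needed.
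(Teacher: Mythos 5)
Your proposal is correct and follows essentially the same route as the paper's own proof: the same localization by stopping times, the same Lyapunov function $\sum_{i}(X_i-1-\ln X_i)$, the same uniform bound $LV\leq C$ obtained from assumption (\ref{HR}), and the same conclusion drawn from $V(\mathcal{X}(\tau_k))\to\infty$ on $\{\tau_k\leq T\}$ (you phrase it as a Markov-type inequality, the paper as a contradiction, but these are the same estimate). No gaps; your identification of the uniform bound on $LV$ as the only step needing care is exactly where the paper's computation is concentrated.
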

\begin{proof} Since the coefficients of the equation are locally Lipschitz continuous for any given initial value $\mathcal{X}(0)\in \mathbb{R}^6_+$, by the theory of stochastic differential equations there is a unique
local solution $\mathcal{X}(t)$ on $t \in [0,\tau_e)$, where $\tau_e$ is the explosion time (see \cite{Arnold}).  To prove that this solution is global, we have to show that that $\tau_{e}=\infty$ a.s. 

This can be done by setting a sufficiently large value for $m_0\geq 0$, such that $\mathcal{X}(0) \in \left[\frac{1}{m_0}, m_0\right]^{\times 6}$. Then we define a stopping time for  each integer $m \geq m_0$:
  \begin{eqnarray*}\tau_m=&\inf&\left\{t \in [0, \tau_e): S_h(t)\notin\left(\frac{1}{m}, m\right)\lor I_h(t)\notin\left(\frac{1}{m}, m\right)\lor  Q_h(t)\notin\left(\frac{1}{m}, m\right)\lor \right.\cr\cr&& \left.R_h(t)\notin\left(\frac{1}{m}, m\right)\lor  S_r(t)\notin\left(\frac{1}{m}, m\right)\lor  I_r(t) \notin\left(\frac{1}{m}, m\right) \right\}. \end{eqnarray*} By definition, $\tau_m$ is increasing as  $m\to \infty$, and we set $\tau_\infty=\underset{m \to \infty}{\lim} \tau_m$, then $$\tau_\infty \leq \tau_e \,\,\textnormal{ a.s.}$$
 To complete the proof, we have to show that $\tau_ \infty=\infty$ a.s. If this statement is false, then we can find a pair of constants $T > 0$ and $\varepsilon \in (0,1)$ such that $P\{\tau_\infty \leq T\} > \varepsilon$. Hence, there is an integer $m_1 \geq m_0$ such that
  \begin{eqnarray}\label{eq3.1}
 \mathrm{P}\{\tau_m \leq T\} \geq \varepsilon \,\, \textnormal{for all} \,\,\, m \geq m_1.
 \end{eqnarray}

\noindent With this we consider the following  function $\mathcal{V}:\mathbb{R}^6_+\to \mathbb{R}_+$:
\begin{eqnarray*}
\mathcal{V}(S_h,I_h,Q_h,R_h,S_r,I_r)&=&(S_h-1-\log
S_h)+(I_h-1-\log I_h)\cr\cr&&+(Q_h-1-\log
Q_h)+(R_h-1-\log R_h)\cr\cr&&+(S_r-1-\log S_r)+(I_r-1-\log I_r).
\end{eqnarray*}
This function is non-negative, since $x-1-\log x\geq0$ for all $x>0$.

\noindent By It\^o's formula, it holds that
\begin{eqnarray}\label{leeq3.1}
&&d\mathcal{V}(\mathcal{X}(t))= L\mathcal{V}(\mathcal{X}(t))dt\cr\cr&&+\frac{(1-p)\cdot(\sigma_1I_r(t)S_h(t)dB_1(t)+\sigma_2I_h(t)S_h(t)dB_2(t))}{N_h(t)}\cdot\left(\frac{1}{S_h(t)}-\frac{1}{I_h(t)}\right)\cr\cr&&+(S_h(t)- 1)\sigma_3dB_3(t)
+(I_h(t)-1)\sigma_4dB_4(t)+(Q_h(t)-1)\sigma_5dB_5(t)\cr\cr&&+(R_h(t)-1)\sigma_6dB_6(t)+(S_r(t)-1)\sigma_7dB_7(t)+(I_r(t)-1)\sigma_8dB_8(t),
\end{eqnarray}
where $L\mathcal{V}:\mathbb{R}^6_+\to \mathbb{R}_+$ is defined by
\begin{eqnarray*}
&&L\mathcal{V}(\mathcal{X}(t))=\theta_h+\theta_r+4\mu_h+\delta_h+\zeta+2\mu_r+\delta_r+\gamma_h+(1-\theta)\delta_h\cr\cr&&+(1-p)\cdot\frac{(\eta_1I_r(t)+\eta_2I_h(t))}{N_h(t)}+\frac{\eta_3I_r(t)}{N_r(t)}-\mu_hS_h(t)-(\mu_h+\delta_h)I_h(t)\cr\cr&&-(\mu_h+(1-\theta)\delta_h)Q_h(t)-\mu_hR_h(t)-\mu_rS_r(t)
-(\mu_r+\delta_r)I_r(t)-\frac{\theta_h}{S_h(t)}\cr\cr&&-(1-p)\cdot\frac{(\eta_1I_r(t)+\eta_2I_h(t))S_h(t)}{N_h(t)I_h(t)}-\frac{ \zeta I_h(t)}{Q_h(t)}-\frac{\gamma_hQ_h(t)}{R_h(t)}-\frac{\theta_r}{S_r(t)}-\frac{\eta_3S_r(t)}{N_r(t)}\cr\cr&&+(1-p)^2\cdot\frac{\sigma^2_1I^2_r(t)+\sigma^2_2I^2_h(t)}{2N^2_h(t)}+(1-p)^2\cdot\frac{\sigma^2_2S^2_h(t)}{2N^2_h(t)}\cr\cr&&+\frac{\sigma^2_3}{2}+\frac{\sigma^2_4}{2}+\frac{\sigma^2_5}{2}+\frac{\sigma^2_6}{2}+\frac{\sigma^2_7}{2}+\frac{\sigma^2_8}{2}.
\end{eqnarray*}
Following  assumption (\ref{HR}) we have that $\dfrac{I_r}{N_h}\leq \bar{k}$ and $S_h< N_h$, $I_r< N_r$, $S_r< N_r$,  $I_h< N_h$, then we can see that
\begin{eqnarray}\label{leq3.2}
&L\mathcal{V}(\mathcal{X}(t))&\leq K,
\end{eqnarray}
with the constant $K$ given by \begin{eqnarray*}
K&=&\theta_h+\theta_r+4\mu_h+\delta_h+\zeta+2\mu_r+\delta_r+\gamma_h+(1-\theta)\delta_h+(1-p)\cdot (\eta_1\bar{k}+\eta_2)\cr\cr&&+\eta_3+(1-p)^2\cdot\frac{\sigma^2_1}{2}\cdot{\bar{k}}^2+(1-p)^2\sigma^2_2+\frac{\sigma^2_3}{2}+\frac{\sigma^2_4}{2}+\frac{\sigma^2_5}{2}+\frac{\sigma^2_6}{2}+\frac{\sigma^2_7}{2}+\frac{\sigma^2_8}{2}.
\end{eqnarray*}

\noindent Integrating both sides of equation (\ref{leeq3.1})  from 0 to
$\tau_m \wedge t=\min\{\tau_m,t\}$, where $t\in[0,T]$, and by  (\ref{leq3.2}) we obtain the following relation
\begin{eqnarray*}
\int^{\tau_m \wedge
t}_0d\mathcal{V}\bigg(\mathcal{X}(r)\bigg) &\leq&
\int^{\tau_m \wedge
t}_0 Kdr
-(1-p)\int^{\tau_m \wedge
t}_0\frac{(S_h(r)-1)\sigma_1I_r(r)}{N_h(r)}dB_1(r)\cr\cr&&+(1-p)\int^{\tau_m \wedge
t}_0\frac{(I_h(r)-S_h(r))\sigma_2}{N_h(r)}dB_2(r)\cr\cr&&+
\int^{\tau_m \wedge
t}_0
(S_h(r)-1)\sigma_3dB_3(r)+\int^{\tau_m \wedge
t}_0(I_h(r)-1)\sigma_4dB_4(r)\cr\cr&&+\int^{\tau_m \wedge
t}_0
(Q_h(r)-1)\sigma_5dB_5(r)+\int^{\tau_m \wedge
t}_0
(R_h(r)-1)\sigma_6dB_6(r)\cr\cr&&+\int^{\tau_m \wedge
t}_0 (S_r(r)-1)\sigma_7dB_7(r)+\int^{\tau_m \wedge
t}_0 (I_r(r)-1)\sigma_8dB_8(r),
\end{eqnarray*}
since $\mathcal{X}(\tau_m \wedge
t)\in \mathbb{R}^6_+$, $t\in[0,T]$. 

The rest of the proof follows the arguments in Mao et. al. \cite{Mao2}. Taking the expectation on both sides above, we have 
\begin{eqnarray}\label{rel1}
\mathrm{E}\mathcal{V}\left(\mathcal{X}(\tau_m\wedge T)\right)&\leq& \mathcal{V}(\mathcal{X}(0))+ \mathrm{E}\int^{\tau_m \wedge
T}_0 Kdr\cr\cr&&=\mathcal{V}(\mathcal{X}(0))+ K \mathrm{E} (\tau_m\wedge T) \leq
\mathcal{V}(\mathcal{X}(0))+KT.
\end{eqnarray}

\noindent Set $A_m=\{\tau_m \leq t\}$ for $m \geq m_1 $. By  (\ref{eq3.1}),
we have $P(A_m) \geq \varepsilon$.
Note that for every $\omega \in A_m$, there is at least one
term among these five $S_h(\tau_m\wedge
T)$, $I_h(\tau_m\wedge T)$, $Q_h(\tau_m\wedge
T)$, $R_h(\tau_m\wedge T)$, $S_r(\tau_m\wedge T)$ and $I_r(\tau_m\wedge T)$ which is
equal to either $m$ or $\dfrac{1}{m}$. Then
$$\mathcal{V}(\mathcal{X}(\tau_m\wedge T)) \geq
\left(\left(m-1-\log m\right)\wedge\left(\frac{1}{m}-1-\log \frac{1}{m}\right)\right).
$$
Therefore, from (\ref{eq3.1}) and (\ref{rel1}),  we can easily obtain
\begin{eqnarray*}
\mathcal{V}(\mathcal{X}(0))+KT &\geq&  \mathrm{E}\left(\mathbb{I}_{A_m(\omega)}
\mathcal{V}(\mathcal{X}(\tau_m))\right)
\cr\cr&&\geq\varepsilon\left(\left(m-1-\log m\right)\wedge\left(\frac{1}{m}-1-\log \frac{1}{m}\right)\right),
\end{eqnarray*}
where $\mathbb{I}_{A_m(\omega)}$  is the indicator function of $A_m$.  Letting $m$ increase to infinity, we find a   contradiction: 
 $\infty\leq\mathcal{V}(\mathcal{X}(0))+KT=\infty.$ So it must be $\tau_{\infty}=\infty$ a.s.
 The non-negativity of the solution is guaranteed as a product of the argument above thanks to the definition of $(\tau_m)_{m\geq m_0}$ and $\tau_{\infty}=\infty$ a.s.
 \end{proof}

\noindent The result above shows that the proposed model is well-posed and provides plausible future scenarios for the evolution of the disease depending on the initial
circumstances (i.e., initial conditions), representing the population conditions at the start of the epidemic, and the set of parameters described in Table 1.

Particularly, it is important that the trajectories keep positive since a negative solution would not have meaningful interpretation.

\vspace{0.5cm}

\section{Stochastically Ultimately Bounded and Permanent}
\noindent After establishing the well-posedness  of the system (\ref{STD1.1}), we proceed detailing other properties.   
 Now, we present the definition of stochastic ultimate boundedness and permanent solution  of the system, see \cite{Cai}.
\begin{definition}\label{SUB}
     The solutions $\mathcal{X}(t)$, $t\geq0$, of model (\ref{STD1.1}) are said to be \textbf{stochastically ultimately bounded}, if for any $\epsilon\in(0,1)$, there is a positive constant $\kappa = \kappa(\epsilon)$, such that for any
initial value $\mathcal{X}(0) \in \mathbb{R}^6_+ $, the solution to (\ref{STD1.1}) satisfies
$$\underset{t\to \infty} {\lim \sup} \,\mathrm{P}\left\{|\mathcal{X}(t)|>\kappa\right\}<\epsilon.$$
\end{definition}
\begin{definition}\label{per}
The solutions $\mathcal{X}(t)$, $t\geq0$ of model (\ref{STD1.1})
are said to be \textbf{stochastically permanent} if for any $\epsilon \in (0, 1)$,
there exists a pair of positive constants $\kappa = \kappa(\epsilon)$ and $\chi = \chi(\epsilon)$,
such that for any initial value $\mathcal{X}(0) \in \mathbb{R}^6_+ $, the solution
to (\ref{STD1.1}) has the following properties
$$\underset{t\to \infty} {\lim \inf} \,\mathrm{P}\left\{|\mathcal{X}(t)|\geq\kappa\right\}\geq 1-\epsilon;$$
$$\underset{t\to \infty} {\lim \inf} \,\mathrm{P}\left\{|\mathcal{X}(t)|\leq\chi\right\}\geq 1-\epsilon.$$
\end{definition}

\begin{proposition}
For any initial value $\mathcal{X}(0) \in \R^6_+$, let model (\ref{STD1.1}) satisfy 
\begin{equation}\label{K}
\mathcal{K}(t)=N_h(t)+N_r(t)>0\,\,\, \text{for all }\,\,\, t\geq0.
\end{equation}
Then the solution $\mathcal{X}(t)$, $t\geq 0$ is stochastically ultimately bounded and permanent.
\end{proposition}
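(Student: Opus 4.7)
The plan is to work with the aggregated population process $\mathcal{K}(t) = N_h(t) + N_r(t)$, which essentially telescopes the internal transfer terms of (\ref{STD1.1}) and leaves a one-dimensional SDE of very convenient Pearl--Verhulst type. First I would sum the six equations of (\ref{STD1.1}): the infection flows $(1-p)\tfrac{\eta_1 I_r + \eta_2 I_h}{N_h} S_h$ cancel between $dS_h$ and $dI_h$, the $dB_2$ noise cancels likewise, and the transitions $\zeta I_h$ and $\gamma_h Q_h$ cancel between the human compartments. The resulting identity reads
\begin{equation*}
d\mathcal{K}(t) = \bigl[\theta_h + \theta_r - \mu_h N_h(t) - \mu_r N_r(t) - \delta_h I_h(t) - (1-\theta)\delta_h Q_h(t) - \delta_r I_r(t)\bigr]\,dt + dM(t),
\end{equation*}
where $M$ is a local martingale. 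Since every $X_i \geq 0$ and $\sum_i X_i^2 \leq (\sum_i X_i)^2 = \mathcal{K}^2$, the quadratic variation satisfies $d\langle M\rangle_t \leq C\,\mathcal{K}(t)^2 dt$ for an explicit constant $C$ depending on $(1-p)$ and the volatilities. Setting $\mu := \min(\mu_h,\mu_r)$, the drift is bounded by $\theta_h+\theta_r - \mu\mathcal{K}(t)$.

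For the upper estimate (stochastic ultimate boundedness and the second inequality in Definition \ref{per}), I would localize $\mathcal{K}$ with the stopping times $(\tau_m)$ of Theorem \ref{Global}, take expectation, use Fatou, and obtain the ODE inequality $\tfrac{d}{dt}\mathrm{E}[\mathcal{K}(t)] \leq \theta_h + \theta_r - \mu\,\mathrm{E}[\mathcal{K}(t)]$. Gronwall then yields $\sup_{t\geq 0}\mathrm{E}[\mathcal{K}(t)] \leq \max\bigl(\mathcal{K}(0), (\theta_h+\theta_r)/\mu\bigr) =: C_1$. Because $|\mathcal{X}(t)| \leq \mathcal{K}(t)$ (non-negativity of components), Markov's inequality produces $\mathrm{P}\{|\mathcal{X}(t)|>\kappa\} \leq C_1/\kappa$, so for any $\epsilon\in(0,1)$ choosing $\kappa > C_1/\epsilon$ suffices.

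For the lower estimate (first inequality in Definition \ref{per}), I would apply It\^o's formula to $V(t) := \mathcal{K}(t)^{-q}$ with a small $q>0$; the assumption (\ref{K}) is precisely what ensures this is well-defined. Using the SDE for $\mathcal{K}$ and the bound on $\langle M\rangle$ gives
\begin{equation*}
LV(t) \leq -\frac{q(\theta_h+\theta_r)}{\mathcal{K}(t)^{q+1}} + \frac{C_q}{\mathcal{K}(t)^{q}},
\end{equation*}
where $C_q$ combines $\mu_h,\mu_r,\delta_h,\delta_r,(1-\theta)$, $\zeta,\gamma_h$ and the $\sigma_i$'s. As $\mathcal{K}\to 0^+$ the $\mathcal{K}^{-(q+1)}$ term dominates and drives $LV$ to $-\infty$, while for $\mathcal{K}$ bounded away from $0$ both terms are bounded. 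Hence for any $A>0$ the map $\mathcal{K}\mapsto LV+AV$ attains a finite supremum $B$ on $(0,\infty)$, found by optimizing the polynomial $x\mapsto (C_q+A)x^{q}-q(\theta_h+\theta_r)x^{q+1}$ in $x=1/\mathcal{K}$. The standard argument $\tfrac{d}{dt}\mathrm{E}[e^{At}V(t)] \leq B e^{At}$ then yields $\sup_t \mathrm{E}[V(t)] \leq C_2 < \infty$. Since $|\mathcal{X}(t)| \geq \mathcal{K}(t)/\sqrt{6}$ by Cauchy--Schwarz, Markov again gives $\mathrm{P}\{|\mathcal{X}(t)| < \kappa\} \leq \mathrm{P}\{\mathcal{K}(t) < \sqrt{6}\kappa\} \leq (\sqrt{6}\kappa)^{q} C_2$, so choosing $\kappa$ small enough closes the argument.

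The main obstacle will be the It\^o computation for $\mathcal{K}^{-q}$: one must keep precise track of how the volatility contributions from all eight Brownian motions accumulate in $C_q$, and verify the inequality $\sum_i \sigma_i^2 X_i^2 \leq \tilde{C}\,\mathcal{K}^2$ with the correct constant, so that the dominating negative term $-q(\theta_h+\theta_r)\mathcal{K}^{-(q+1)}$ indeed controls the positive contributions of order $\mathcal{K}^{-q}$. Once that is done, the existence of $B$ follows from elementary calculus on a single real variable, and the proof of permanence is complete.
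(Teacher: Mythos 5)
Your proposal is correct, but it follows a genuinely different route from the paper's. The paper works with the single Lyapunov function $\mathcal{V}=\mathcal{K}+\frac{1}{\mathcal{K}}$, derives $L\mathcal{V}\leq \mathcal{F}-H_1\mathcal{V}$ with $H_1=\max\{\mu_h,\mu_r\}$, applies the exponential weighting $e^{H_1t}\mathcal{V}$ to get $\sup_t\mathrm{E}[\mathcal{V}(\mathcal{X}(t))]\leq \alpha$, and reads off both tail bounds from one Markov inequality; the permanence half is then only sketched with a reference to \cite{Cai}. You instead split the work: you first derive the exact scalar SDE for $\mathcal{K}$ (your telescoping of the transfer terms and of the $dB_2$ noise is right, and the drift identity and the bound $d\langle M\rangle_t\leq C\mathcal{K}^2dt$ are correct), use $\mathcal{K}$ itself with Gronwall for the upper bound, and use $\mathcal{K}^{-q}$ with the $LV+AV$ optimization for the lower bound. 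Your version buys something real: the paper's constant $\mathcal{F}$ contains $\bigl(\inf_{t\geq0}\mathcal{K}(t)\bigr)^{-1}$, a pathwise random quantity that is finite only under the implicitly stronger hypothesis $\inf_{t\geq0}\mathcal{K}(t)>0$ (assumption (\ref{K}) alone does not give this), whereas your argument needs no a priori lower bound on $\mathcal{K}$ because the recruitment term $\theta_h+\theta_r$ supplies the dominant negative contribution $-q(\theta_h+\theta_r)\mathcal{K}^{-(q+1)}$ near zero --- this is the standard and fully rigorous technique for stochastic permanence, and it makes the second half of the statement an actual proof rather than a remark. The price is a longer computation (the It\^o formula for $\mathcal{K}^{-q}$ and the bookkeeping of the eight volatilities in $C_q$), but every step you outline --- the cancellations, the inequalities $|\mathcal{X}|\leq\mathcal{K}\leq\sqrt{6}\,|\mathcal{X}|$, the localization with the stopping times of Theorem \ref{Global}, and the finiteness of $B=\sup_{x>0}\bigl((C_q+A)x^{q}-q(\theta_h+\theta_r)x^{q+1}\bigr)$ --- checks out.
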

\noindent From a biological point of view, assumption (\ref{K}) is the relevant case, since $\underset{t\geq0}\inf\, \mathcal{K}(t)=0$ would also mean that  the populations  are extinct.
\begin{proof}
Define $\mathcal{V}(\mathcal{X}(t))=\mathcal{K}(t)+\frac{1}{\mathcal{K}(t)}$. Applying the It\^o formula,  we get
\begin{eqnarray*}\label{Int}
&&d\mathcal{V}(\mathcal{X}(t))=L\mathcal{V}(\mathcal{X}(t))dt\cr\cr&&+\left(1-\frac{1}{\mathcal{K}^2(t)}\right)\bigg[ \frac{(p-1)\sigma_1I_r(t)S_h(t)}{N_h(t)}dB_1(t)+\sigma_3S_h(t)dB_3(t)+\sigma_4I_h(t)dB_4(t)\cr\cr&&+\sigma_5Q_h(t)dB_5(t)+\sigma_6R_h(t)dB_6(t)+\sigma_7S_r(t)dB_7(t)
+\sigma_8I_r(t)dB_8(t)\bigg],
\end{eqnarray*}
where
\begin{eqnarray*}
&&L\mathcal{V}(\mathcal{X}(t))=\left(1-\frac{1}{\mathcal{K}^2(t)}\right)\bigg(\theta_h-\mu_h(S_h(t)+I_h(t)+Q_h(t)+R_h(t))\cr\cr&&-\delta_hI_h(t)-(1-\theta)\delta_hQ_h(t)+\theta_r-\mu_r(S_r(t)+I_r(t))-\delta_rI_r(t)\bigg) \cr\cr&&+\frac{\sigma^2_1(1-p)^2I^2_r(t)S^2_h(t)}{N^2_h\mathcal{K}^3(t)}+\frac{2\sigma^2_2(1-p)^2I^2_h(t)S^2_h(t)}{N^2_h\mathcal{K}^3(t)}\cr\cr&&+
\frac{\sigma^2_3S^2_h(t)}{\mathcal{K}^3(t)}+\frac{\sigma^2_4I^2_h(t)}{\mathcal{K}^3(t)}+\frac{\sigma^2_5Q^2_h(t)}{\mathcal{K}^3(t)}+\frac{\sigma^2_6R^2_h(t)}{\mathcal{K}^3(t)}+\frac{\sigma^2_7S^2_r(t)}{\mathcal{K}^3(t)}+\frac{\sigma^2_8I^2_r(t)}{\mathcal{K}^3(t)}.
\end{eqnarray*}

\noindent We can rewrite the function $ L\mathcal{V}(\mathcal{X}(t)),\, t\geq 0,$ as 
\begin{eqnarray*}
&&L\mathcal{V}(\mathcal{X}(t))
=\theta_h-\delta_hI_h(t)+\theta_r-(1-\theta)\delta_hQ_h(t)+\frac{(1-\theta)\delta_h Q_h(t)}{\mathcal{K}^2(t)}\cr\cr&&-H_1\left(\mathcal{K}(t)+\frac{1}{\mathcal{K}(t)}\right)+\frac{2H_1}{\mathcal{K}(t)}+\frac{\delta_r I_r(t)}{\mathcal{K}^2(t)}\cr\cr&&+
\frac{\sigma^2_1(1-p)^2I^2_r(t)S^2_h(t)}{N^2_h\mathcal{K}^3(t)}+\frac{2\sigma^2_2(1-p)^2I^2_h(t)S^2_h(t)}{N^2_h\mathcal{K}^3(t)}\cr\cr&&+
\frac{\sigma^2_3S^2_h(t)}{\mathcal{K}^3(t)}+\frac{\sigma^2_4I^2_h(t)}{\mathcal{K}^3(t)}+\frac{\sigma^2_5Q^2_h(t)}{\mathcal{K}^3(t)}+\frac{\sigma^2_6R^2_h(t)}{\mathcal{K}^3(t)}+\frac{\sigma^2_7S^2_r(t)}{\mathcal{K}^3(t)}+\frac{\sigma^2_8I^2_r(t)}{\mathcal{K}^3(t)},
\end{eqnarray*}
for any $t$, with $H_1=\max\{\mu_h,\mu_r\}$. Since (\ref{K}) holds, and $\dfrac{S_h}{N_h}<1$, we find the upper estimate:
\begin{eqnarray*}
 L\mathcal{V}(\mathcal{X}(t))\leq \mathcal{F}-H_1\left(\mathcal{K}(t)+\frac{1}{\mathcal{K}(t)}\right),
\end{eqnarray*}
which is
\begin{equation}\label{t1}
    L\mathcal{V}(\mathcal{X}(t))\leq \mathcal{F}-H_1\mathcal{V}(t),
\end{equation}
where $\mathcal{F}=\theta_h+\theta_r+\dfrac{\delta_h+\delta_r+2H_1}{\underset{t\geq0}\inf\,\mathcal{K}(t)}$.

\noindent Let  $\mathcal{V}_1(\mathcal{X}(t))=e^{H_1t}\mathcal{V}(\mathcal{X}(t))$, using It\^o formula,  by (\ref{t1}) we have
 \begin{eqnarray*}
d(e^{H_1t}\mathcal{V}(\mathcal{X}(t)) &\leq&  H_1e^{H_1t}\mathcal{V}(\mathcal{X}(t)))dt+e^{H_1t}(\mathcal{F}-H_1\mathcal{V}(\mathcal{X}(t)))dt \cr\cr&& +e^{H_1t}\left(1-\frac{1}{\mathcal{K}^2(t)}\right)\bigg[ \frac{(p-1)\sigma_1I_r(t)S_h(t)}{N_h(t)}dB_1(t)\cr\cr&&+\sigma_3S_h(t)dB_3(t)+\sigma_4I_h(t)dB_4(t)+\sigma_5Q_h(t)dB_5(t)\cr\cr&&+\sigma_6R_h(t)dB_6(t)+\sigma_7S_r(t)dB_7(t)
+\sigma_8I_r(t)dB_8(t)\bigg] \cr\cr
 &&=e^{H_1t}\mathcal{F}dt+e^{H_1t}\left(1-\frac{1}{\mathcal{K}^2(t)}\right)\bigg[\frac{(p-1)\sigma_1I_r(t)S_h(t)}{N_h(t)}dB_1(t)\cr\cr
 &&+\sigma_3S_h(t)dB_3(t)+\sigma_4I_h(t)dB_4(t)+\sigma_5Q_h(t)dB_5(t)\cr\cr
 &&+\sigma_6R_h(t)dB_6(t)+\sigma_7S_r(t)dB_7(t)
+\sigma_8I_r(t)dB_8(t)\bigg].
\end{eqnarray*}
\noindent Then
\begin{eqnarray*}
\mathrm{E}(\mathcal{V}(\mathcal{X}(t)))&=&\mathrm{E}\left(e^{-H_1t}\mathcal{V}(\mathcal{X}(0))+ \frac{\mathcal{F}}{H_1}\right)\leq\frac{\mathcal{F}}{H_1}+\mathcal{V}(\mathcal{X}(0))=\alpha.
\end{eqnarray*}
\noindent Let's fix $\epsilon>0$, then exist $\kappa>0$ such that $\dfrac{\alpha}{\kappa}<\epsilon$. By Markov's inequality,  for a sufficiently large constant $\kappa$  we have
\begin{equation*}
  P\left\{\mathcal{K}(t)+\frac{1}{\mathcal{K}(t)} > \kappa\right\} \leq \frac{1}{\kappa}\mathrm{E}\left(\mathcal{K}(t)+\frac{1}{\mathcal{K}(t)}\right) \leq  \frac{\alpha}{\kappa}<\epsilon. 
\end{equation*}

\noindent Then
\begin{equation*}
\lim_{t \to \infty}\sup \mathrm{P}\left\{\mathcal{K}(t)+\frac{1}{\mathcal{K}(t)} > \kappa\right\} \leq \epsilon.
\end{equation*}

\noindent That is
\begin{equation}\label{mi}
\lim_{t \to \infty} \sup \mathrm{P}\{\mathcal{K}(t) > \kappa\} < \epsilon.
\end{equation}
Observe that, for $x\in \mathbb{R}^d_+$, we have $ |x|\leq\sqrt{d}\sum_{i=1}^dx_i$. Then by (\ref{mi}), we get
\begin{equation*}
\lim_{t \to \infty} \sup \mathrm{P}\{|\mathcal{X}(t)| > \kappa\sqrt{6}\} \leq
\lim_{t \to \infty} \sup \mathrm{P}\{\mathcal{K}(t) > \kappa\}\leq\epsilon.
\end{equation*}
According to the Definition \ref{SUB},  it means that  the solutions $\mathcal{X}(t)$,  $t\geq0$, of model (\ref{STD1.1}) are stochastically ultimately bounded. Similarly  can be easily obtained the stochastically
permanent property for solutions according to the Definitions  \ref{per}  (see \cite{Cai}). This completes the proof.
\end{proof}

\noindent  Stochastically ultimately boundedness indicates that the model's depiction of population dynamics won't show unbounded expansion or decline in the actual situation of monkeypox disease transmission. The system's variables, which represent various population segments, have certain upper bounds that they will not ultimately surpass. According to the concept of stochastic eventually boundedness, there is a strong likelihood that the solutions of the stochastic differential equations driving the monkeypox model will remain within certain bounds throughout time.
\section{Study of Extinction of the Disease}
\noindent In this section, we determine the factors that could lead to the extinction of the disease.
\begin{theorem}\label{Exti}
Let $\mathcal{X}(t)$, $t\geq0$, be a solution of
system (\ref{STD1.1}) with initial value $\mathcal{X}(0)\in\mathbb{R}^6_+$. Assume that $$\mathcal{R}_0=\frac{(1-p)(\eta_1+\eta_2)+\eta_3}{\min\{\mu_h,\mu_r\}+\min\{\delta_h,\delta_r\}} <1,$$
  then the number of infected individuals of system (\ref{STD1.1}) goes to extinction almost surely, i.e.,
 \begin{equation}\label{sum} \underset{t\to\infty}{\lim }(I_h(t)+Q_h(t)+I_r(t))=0\,\,\,\textnormal{a.s.}
\end{equation}

\end{theorem}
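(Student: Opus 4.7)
The plan is to apply It\^o's formula to $\log V(t)$, where $V(t) := I_h(t) + Q_h(t) + I_r(t)$ is the total size of the three infected compartments. By Theorem \ref{Global}, each coordinate of the solution is strictly positive almost surely, so $\log V$ is well-defined. I aim to establish
\[
\log V(t) \;\leq\; \log V(0) - \lambda t + M(t), \qquad t\geq 0,
\]
for some constant $\lambda > 0$ and a continuous local martingale $M$ with $\langle M\rangle_t = O(t)$. The strong law for continuous local martingales (via time-change to Brownian motion) then forces $M(t)/t\to 0$ a.s., hence $\limsup_{t\to\infty} t^{-1}\log V(t) \leq -\lambda < 0$, which gives $V(t)\to 0$ a.s., i.e., (\ref{sum}).

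The substantive work is bounding the drift of $V$ above by $-\lambda V$. Summing the three drifts cancels the $\zeta I_h$ transfer term; using $S_h \leq N_h$, $S_r\leq N_r$ together with the elementary inequality $\eta_1 I_r + \eta_2 I_h \leq (\eta_1+\eta_2)(I_h+I_r)$, I obtain
\[
\mathrm{drift}(V) \;\leq\; [(1-p)(\eta_1+\eta_2) + \eta_3](I_h+I_r) - (\mu_h+\delta_h)I_h - (\mu_h+\gamma_h+(1-\theta)\delta_h) Q_h - (\mu_r+\delta_r)I_r.
\]
Minorising $\mu_h, \mu_r$ by $\mu_{\min}:=\min\{\mu_h,\mu_r\}$ and $\delta_h,\delta_r$ by $\delta_{\min}:=\min\{\delta_h,\delta_r\}$ yields $\mathrm{drift}(V) \leq -\lambda_1(I_h+I_r) - \lambda_2 Q_h$ with $\lambda_1 := \mu_{\min}+\delta_{\min}-[(1-p)(\eta_1+\eta_2)+\eta_3]>0$ under the hypothesis $\mathcal{R}_0<1$, and $\lambda_2 := \mu_h+\gamma_h+(1-\theta)\delta_h > 0$. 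Setting $\lambda := \min\{\lambda_1,\lambda_2\}$ gives $\mathrm{drift}(V) \leq -\lambda V$, so the drift of $d\log V$ is at most $-\lambda$.

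For the quadratic variation, the diffusion of $V$ is the sum of the independent contributions $(1-p)\sigma_2 S_h I_h/N_h\, dB_2 + \sigma_4 I_h\, dB_4 + \sigma_5 Q_h\, dB_5 + \sigma_8 I_r\, dB_8$, so using $S_h \leq N_h$ and $I_h^2+Q_h^2+I_r^2\leq V^2$ I obtain $d\langle V\rangle \leq \Sigma V^2\, dt$ with $\Sigma := \max\{(1-p)^2\sigma_2^2+\sigma_4^2,\,\sigma_5^2,\,\sigma_8^2\}$. The It\^o correction $-\tfrac{1}{2V^2}\,d\langle V\rangle$ is non-positive, so it only reinforces the drift bound, while the martingale $M(t) := \int_0^t V(s)^{-1}\,d(\text{diffusion of }V)$ satisfies $\langle M\rangle_t \leq \Sigma t$, delivering $M(t)/t \to 0$ a.s.

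The main obstacle I anticipate is the choice of Lyapunov function. The natural candidate $I_h+I_r$ would close the argument only for those two variables and would require an additional variation-of-constants step to push $Q_h\to 0$. Taking the full sum $V = I_h+Q_h+I_r$ avoids this, but one must verify that three compartments with different linear decay rates can be controlled by a single $\lambda$; this works precisely because $\gamma_h\geq 0$ and $(1-\theta)\delta_h\geq 0$ make $\lambda_2>0$, and because $\mathcal{R}_0$ is formulated with the \emph{minimum} of the natural and disease-induced death rates, which is exactly what produces $\lambda_1>0$ from $\mathcal{R}_0<1$.
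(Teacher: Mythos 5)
Your proposal is correct and follows essentially the same route as the paper: the paper also applies It\^o's formula to $\ln(I_h+Q_h+I_r)$, bounds the drift above by $(\min\{\mu_h,\mu_r\}+\min\{\delta_h,\delta_r\})(\mathcal{R}_0-1)<0$ using $S_h\leq N_h$, $S_r\leq N_r$, and concludes via the law of large numbers for the noise terms. Your handling of the martingale part (bounding $\langle M\rangle_t\leq\Sigma t$ and invoking the strong law for continuous local martingales) is in fact somewhat more careful than the paper's shortcut of replacing the stochastic integrals by $\sigma_i B_i(t)$, and your separate treatment of the $Q_h$ decay rate via $\lambda_2$ avoids a minor gap in the paper's single uniform bound, but the overall argument is the same.
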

\begin{proof} 
The It\^o's formula applied to $\mathcal{U}=ln(I_h+Q_h+I_r)$,
where $\mathcal{U}:\R^3_+\to \R_+$,   leads to
\begin{eqnarray}\label{du_hole}
d\mathcal{U}(t)&=&L\mathcal{U}(t)dt+
\frac{(1-p)\sigma_2I_h(t)S_h(t)dB_2(t)}{N_h(t)(I_h(t)+Q_h(t)+I_r(t))}\cr\cr&&+\frac{\sigma_4I_h(t)dB_4(t)+\sigma_5Q_h(t)dB_5(t)+\sigma_8I_r(t)dB_8(t)}{I_h(t)+Q_h(t)+I_r(t)}, 
\end{eqnarray}
where
\begin{eqnarray}\label{luest}
 && L\mathcal{U}(t)= \frac{(1-p)}{N_h(t)(I_h(t)+Q_h(t)+I_r(t))}\cdot\Big(\eta_1I_r(t)S_h(t)+\eta_2I_h(t)S_h(t)\Big)\cr\cr&&+\frac{\eta_3S_r(t)I_r(t)}{N_r(t)(I_h(t)+Q_h(t)+I_r(t))}\cr\cr&&-\frac{(\mu_h+\delta_h)I_h(t)
+(\mu_h+\gamma_h+(1-\theta)\delta_h)Q_h(t)+(\mu_r+\delta_r)I_r(t)}{I_h(t)+Q_h(t)+I_r(t)}\cr\cr&&-\frac{(1-p)^2S^2_h(t)\sigma^2_2I^2_h(t)}{2N^2_h(t)(I_h(t)+Q_h(t)+I_r(t))^2}
-\frac{\sigma^2_4I^2_h(t)+\sigma^2_5Q^2_h(t)+\sigma^2_8I^2_r(t)}{2(I_h(t)+Q_h(t)+I_r(t))^2}
.
\end{eqnarray}
Since $S_h\leq N_h$, $Q_h\leq N_h$, from (\ref{luest}), we get
\begin{eqnarray*}
 L\mathcal{U}(t)&\leq&(1-p)(\eta_1+\eta_2)+\eta_3-(\min\{\mu_h,\mu_r\}+\min\{\delta_h,\delta_r\} )
\end{eqnarray*}
Integrating on both side equation (\ref{du_hole}), and substituting (\ref{luest}), we obtain
\begin{eqnarray*}
 && \mathcal{U}(t)- \mathcal{U}(0)\leq (\min\{\mu_h,\mu_r\}+\min\{\delta_h,\delta_r\} )(\mathcal{R}_0-1)\, t\cr\cr&&+ (1-p)\sigma_2B_2(t)+\sigma_4B_4(t)+\sigma_5B_5(t)+\sigma_8B_8(t).
 \end{eqnarray*}
By the strong low of large number (see \cite{GS}), for $ i=2,4,5,8,$  we have
\begin{equation}\label{win}
    \underset{t\to\infty}{\lim}\frac{B_i(t)}{t}=0 \,\,\,\textnormal{a.s.}
\end{equation}
Since $\mathcal{R}_0<1$ (by assumption),   $L(\mathcal{U})(t) \leq 0$, and by (\ref{win}) we obtain
\begin{equation*}
    \underset{t\to\infty}{\lim \sup}\frac{\mathcal{U}(t)}{t}\leq(\min\{\mu_h,\mu_r\}+\min\{\delta_h,\delta_r\} )(\mathcal{R}_0-1)<0 \,\,\,\textnormal{a.s.}
\end{equation*}
leading to (\ref{sum}), which completes the proof.
\end{proof}
To conform the analytical results above, we  provide some numerical illustrations. First we demonstrate the case for $R_0<1$.
\begin{example} \label{4.2}
Consider the model (\ref{STD1.1}) with parameters value:  
\begin{itemize}
    \item $(S_h(0), Q_h(0), I_h(0), R_h(0), S_r(0), I_r(0))=(90,50,60,70,80,30);$
    \item $(\sigma_1,~ \sigma_1,~ \sigma_3,~ \sigma_4, ~\sigma_5, ~\sigma_6,~ \sigma_7,~ \sigma_8)=(0.05, 0.04, 0.01, 0.05, 0.04, 0.01, 0.05, 0.04).$
\end{itemize}
We also consider the values from  Table \ref{Table 2} that will result in producing, $R_0<1$, where the conditions of Theorem \ref{Exti} are satisfied.

\begin{table}[H]
   \centering
  \begin{tabular}{|c|c|}
    \hline
    Model's Elements & Particular Values of Elements \\
    \hline
$\theta$ & 0.043 \\
$\theta_h$   & 10\\
$p$   & 0.041 \\
$\eta_1$   & 0.009\\
$\eta_2$   & 0.002 \\
$\eta_3$   & 0.0027\\
$\mu_h$   & 0.05\\
$\delta_h$ & 0.003\\
$\zeta$ & 0.5 \\
$\gamma_h$ & 0.2\\
$\theta_r$ & 10\\
$\mu_r$ & 0.02\\
$\delta_r$ & 0.004\\
    \hline
  \end{tabular}
  \caption{Value of the elements}\label{Table 2}
\end{table}
\noindent The graphs are obtained while using python software for $$R_0 =\frac{(1-0.041)(0.009+0.002)+0.0027}{0.02+0.003}= 0.567<1.$$ 
\end{example}
\noindent The obtained curves show the dynamics of the monkeypox disease transmission between humans and rodents population by comparing the deterministic and stochastic versions of the model.  Also the uncertainty is quantified through the means  and standard deviation of $S_h$, $I_h$, $Q_h$, $R_h$, $S_r$ and $I_r$. Standard deviation curves represent the variability of each compartment over multiple simulations, indicating the uncertainty associated with each curve. 

\begin{figure}[H]\label{fig20}
\center{\includegraphics[scale=0.60]{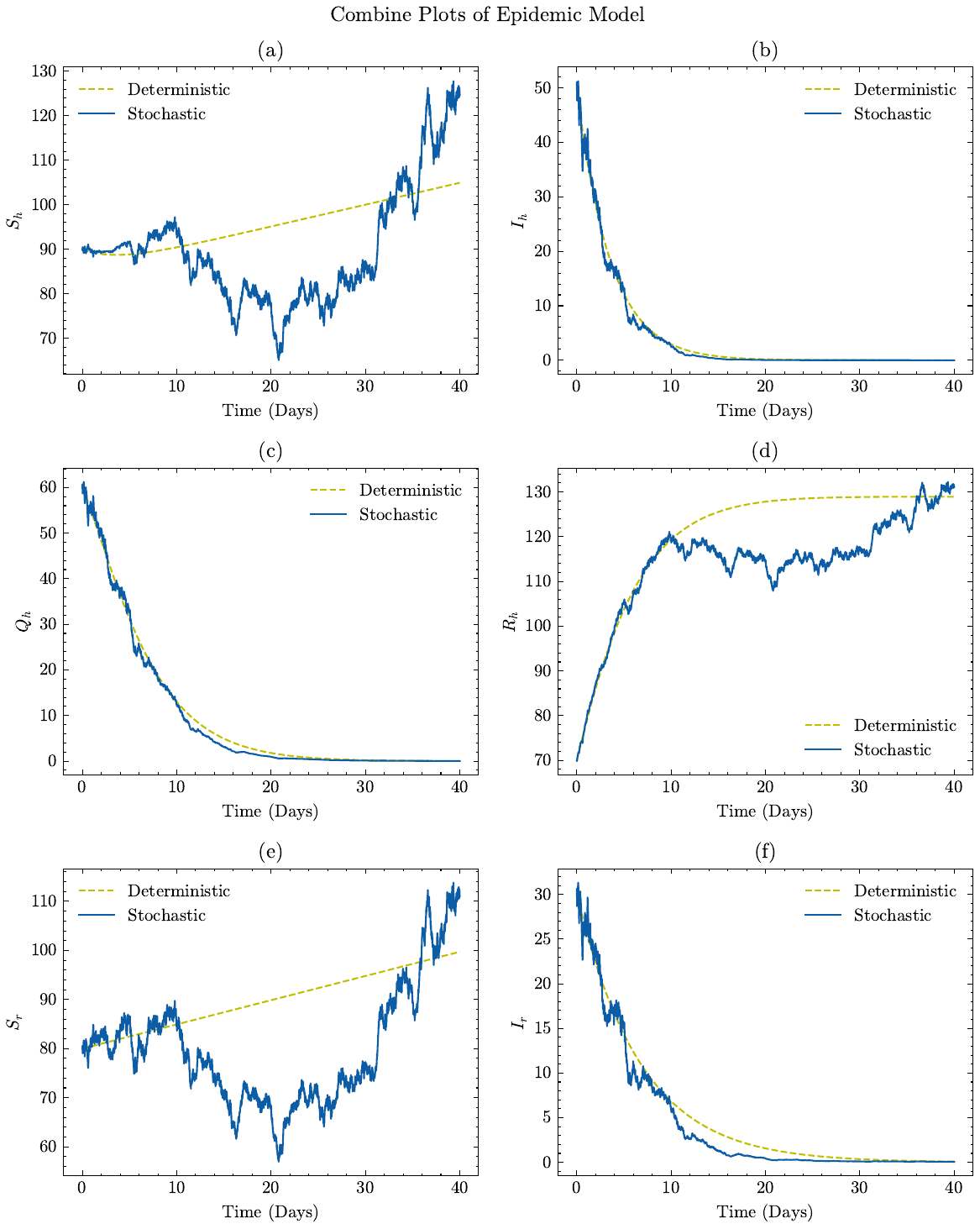}}
\caption{The curves represent the behaviour of susceptible(a), infected(b), isolated(c), recovered(d) humans populations and of different rodents classes (e)-(f) as time evolve.}
\end{figure}

\begin{figure}[H]\label{fig41}
\center{\includegraphics[scale=0.60]{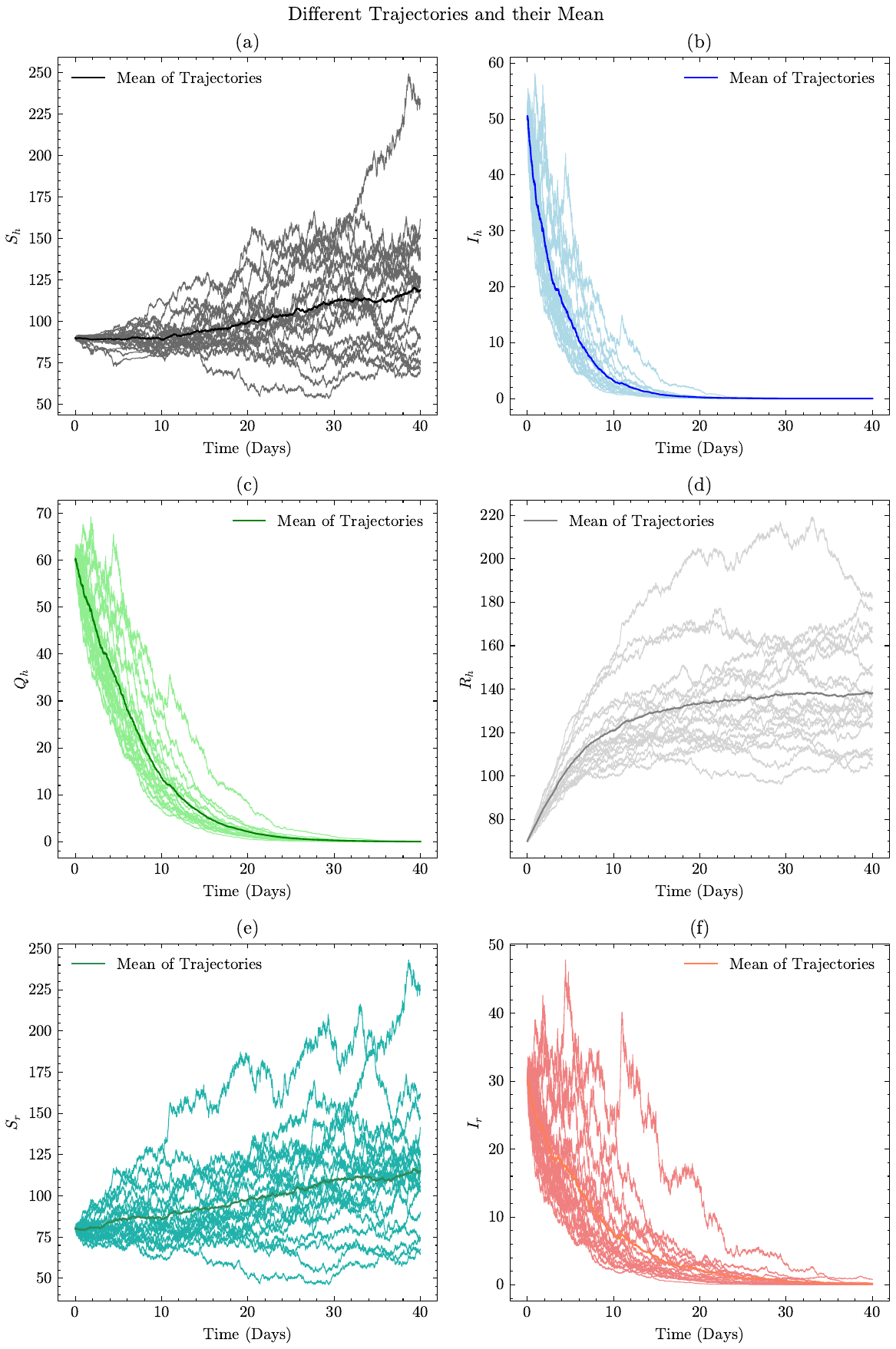}}
\caption{Paths of the different classes of humans (a)-(d) and rodents (e)-(f) along with mean of each family of curves. Means are shown with dark curves surrounded by family of related trajectories }
\end{figure}
\begin{figure}[H]\label{fig51}
\center{\includegraphics[scale=0.60]{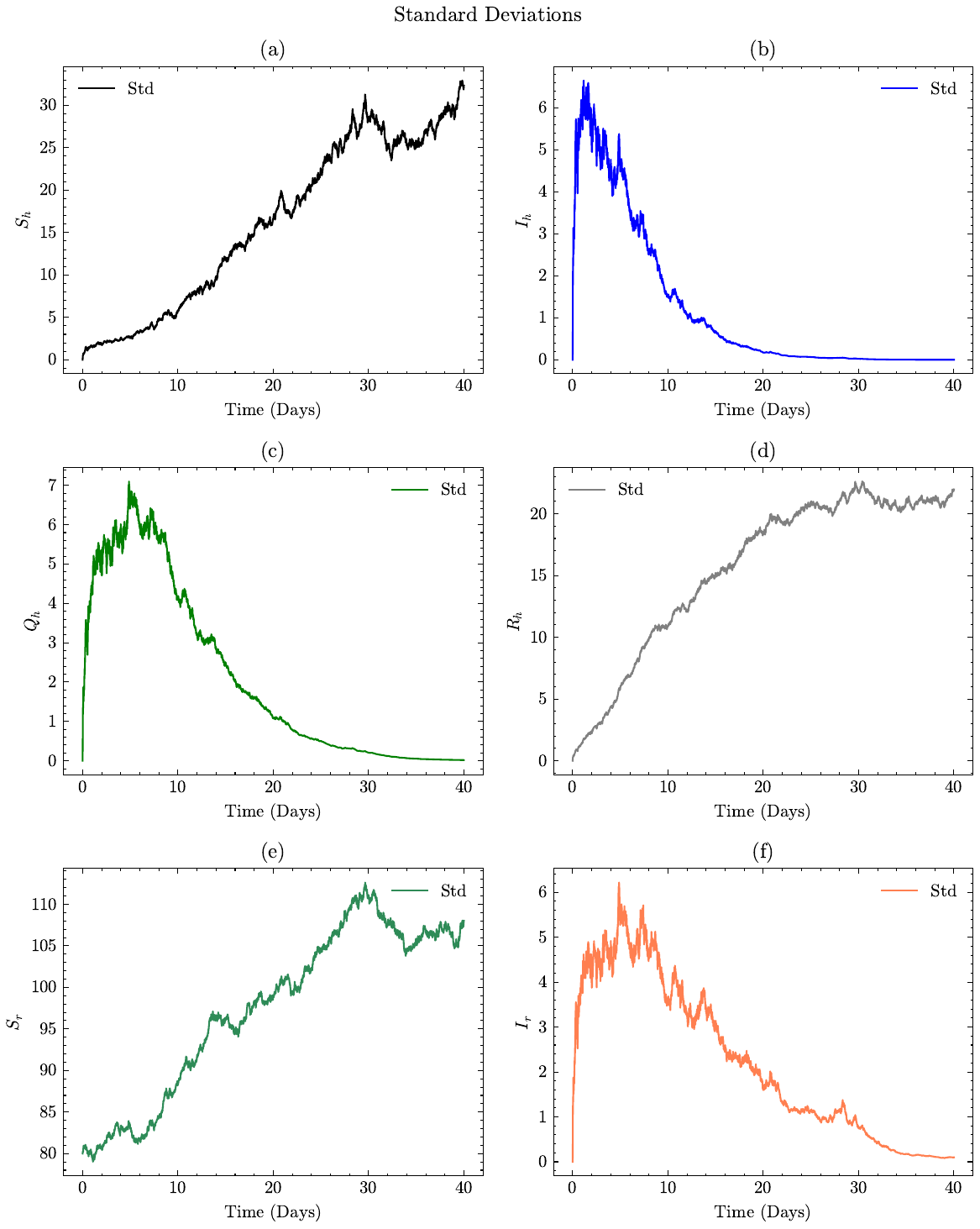}}
\caption{Curves show the variability of each compartment  humans (a)-(d) and rodents (e)-(f) over multiple simulations, indicating the uncertainty associated with each curve.}
\end{figure}
\begin{figure}[H]\label{fig6}
\center{\includegraphics[scale=0.50]{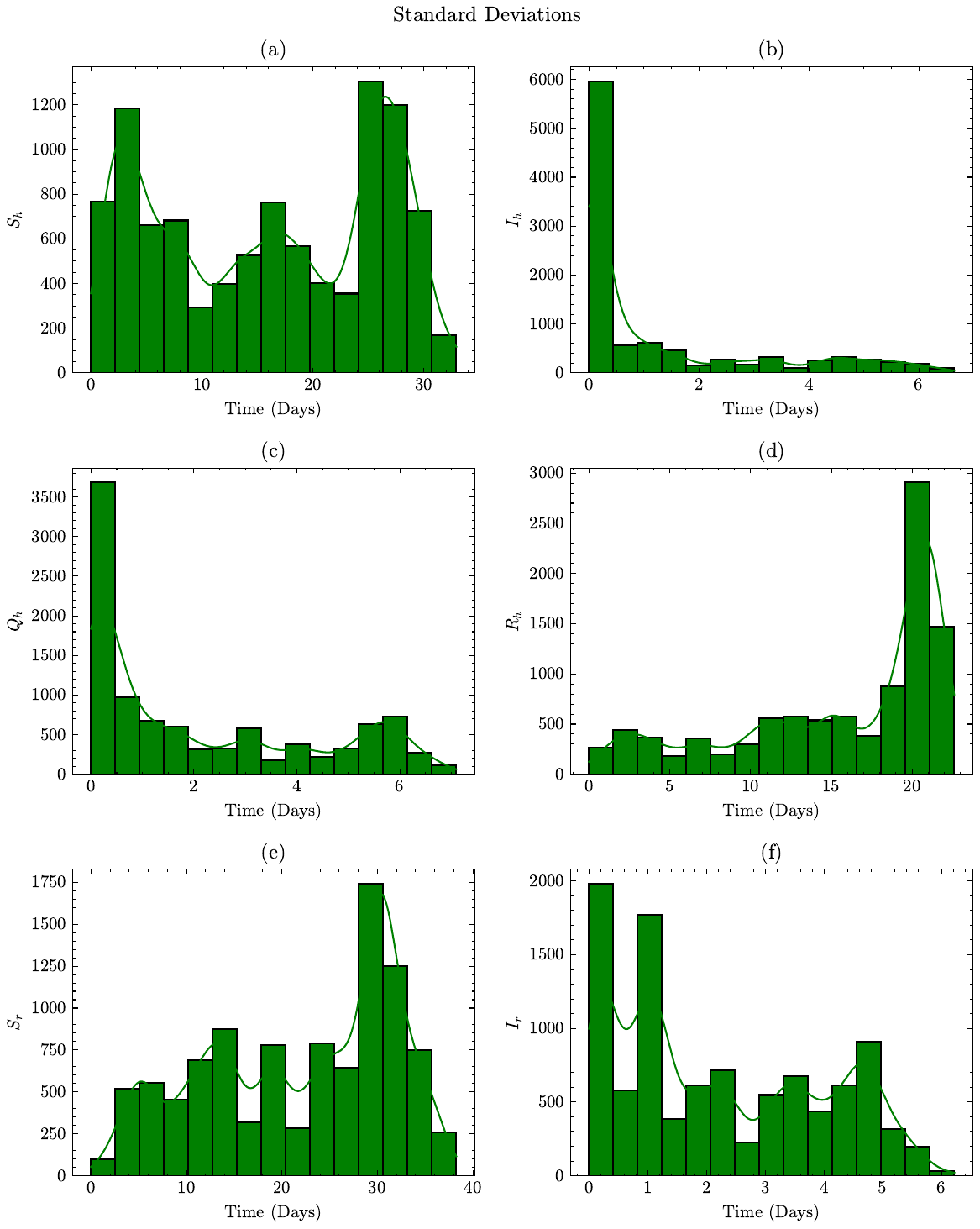}}
\caption{The graphs show the distribution of values for each compartment  humans (a)-(d) and rodents (e)-(f): The histograms for Susceptible Humans(a) and Susceptible Rodents (e) indicate a high number of individuals in these compartments, suggesting a large pool of potential hosts for the disease. The histograms for Infected Humans (b) and Infected Rodents (f) show a smaller number of individuals, indicating the disease prevalence. The histograms for Isolated Humans (c) and Recovered Humans (d) indicate the effectiveness of isolation and recovery measures.}
\end{figure}
\noindent Next we discuss the case for $R_0>1$.
\begin{example}
 To illustrate this we consider the formulated model (\ref{STD1.1}) with the same initial values and volatility $\sigma_i, i=1,...,8$ defined in Example \ref{4.2}.  We also consider the  values from Table \ref{Table 3} that will results in producing, $$R_0 =\frac{(1-0.041)(0.009+0.002)+0.027}{0.02+0.003}= 1.6326>1.$$
 \noindent In particular, we remark that in Example \ref{4.2} the value $\eta_3=0.0027$, while here it is $\eta_3=0.027.$
\begin{table}[H]
   \centering
  \begin{tabular}{|c|c|}
    \hline
    Model's Elements & Particular Values of Elements \\
    \hline
$\theta$ & 0.043 \\
$\theta_h$   & 10\\
$p$   & 0.041 \\
$\eta_1$   & 0.009\\
$\eta_2$   & 0.002 \\
$\eta_3$   & 0.027\\
$\mu_h$   & 0.05\\
$\delta_h$ & 0.003\\
$\zeta$ & 0.5 \\
$\gamma_h$ & 0.2\\
$\theta_r$ & 10\\
$\mu_r$ & 0.02\\
$\delta_r$ & 0.004\\
    \hline
  \end{tabular}
  \caption{Descriptions values of the elements}\label{Table 3}
\end{table}
\noindent All trajectories after long time (800 days) with $R_0>1$ have different nature as compared to the case when, $R_0<1$. Indeed, for $R_0<1$, it is shown in Example \ref{4.2} that all trajectories reach the endemic equilibrium point after 40 days. 
\end{example}

\begin{figure}[H]\label{fig20}
\center{\includegraphics[scale=0.60]{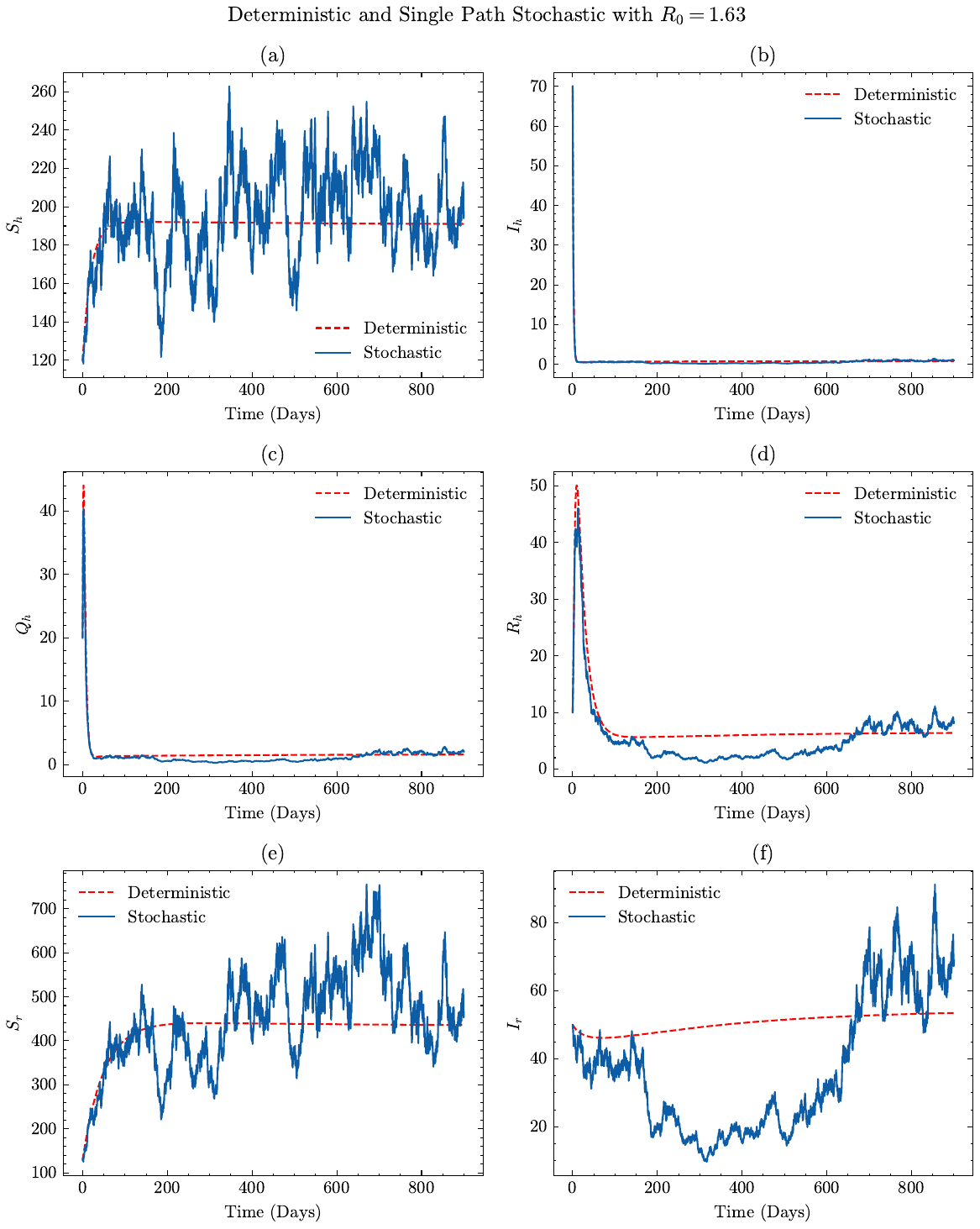}}
\caption{For $R_0>1$: the curves represent the behavior of susceptible(a), infected(b), isolated(c), recovered(d) humans populations and of different rodents classes (e)-(f) as time evolve for 800 days.}
\end{figure}
\begin{figure}[H]\label{fig3}
\center{\includegraphics[scale=0.60]{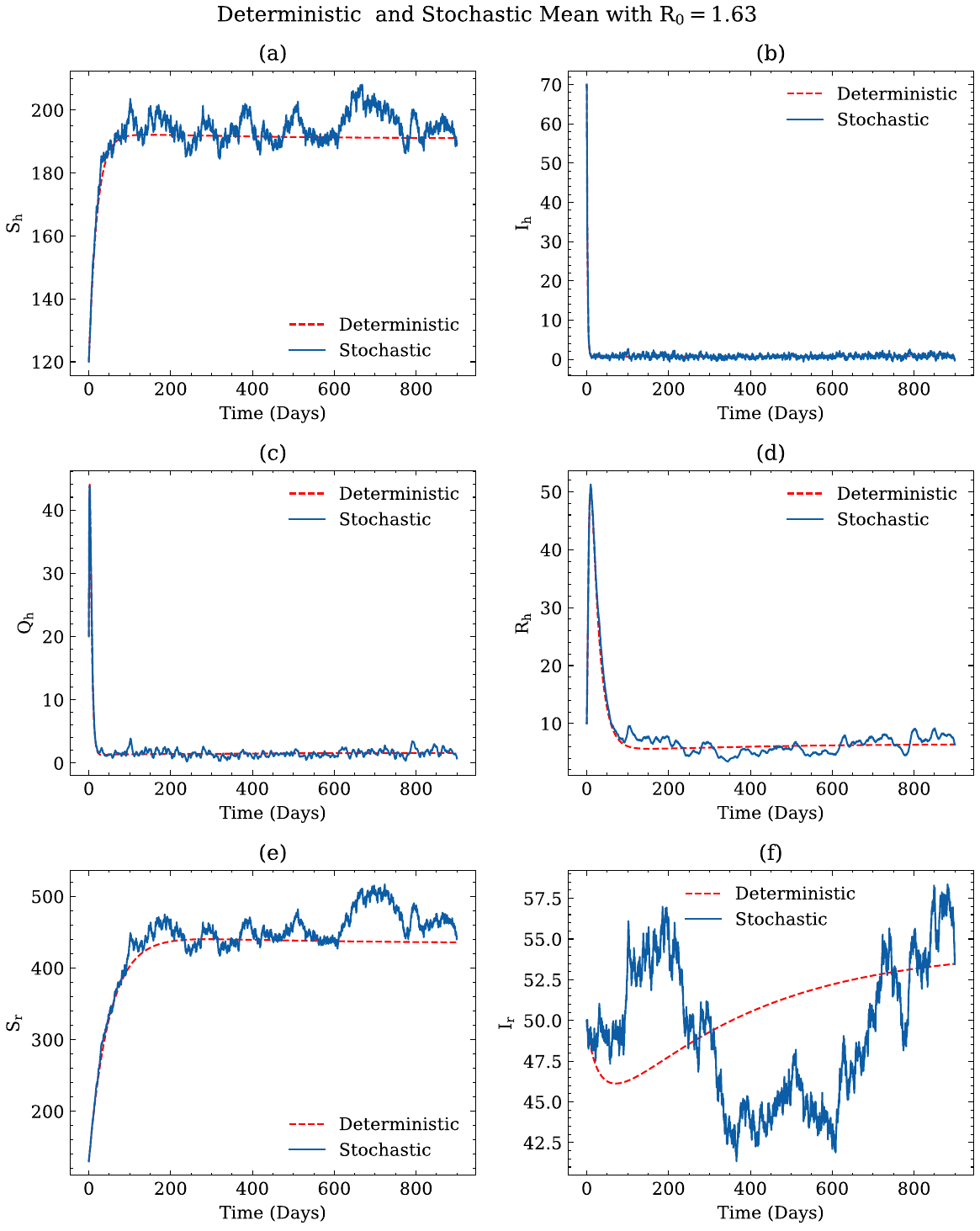}}
\caption{ For $R_0>1$: the curves without noise and mean of cures with noise for human population of susceptible(a), infected(b), isolated(c), recovered(d) humans populations and of different rodents classes (e)-(f) as time evolve for 800 days. }
\end{figure}
\begin{figure}[H]\label{fig4}
\center{\includegraphics[scale=0.60]{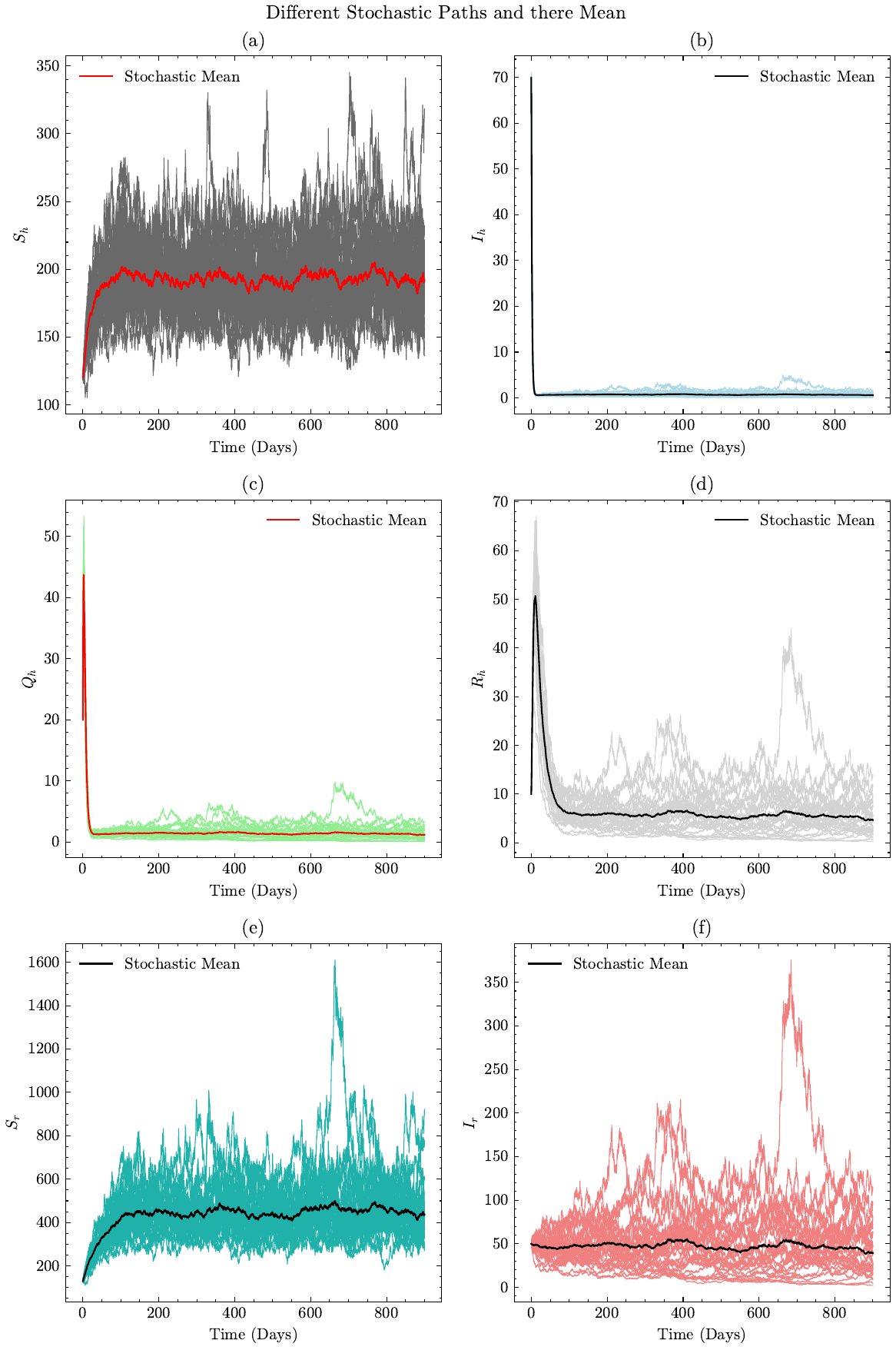}}
\caption{ For $R_0>1$, paths of the different classes of humans (a)-(d) and rodents (e)-(f) along with mean of each family of curves. Means of each family of paths are shown with dark curves surrounded by family of related trajectories. After a long time 800 days the infected population still fluctuate.}
\end{figure}
\newpage
\begin{figure}[H]\label{fig5}
\center{\includegraphics[scale=0.60]{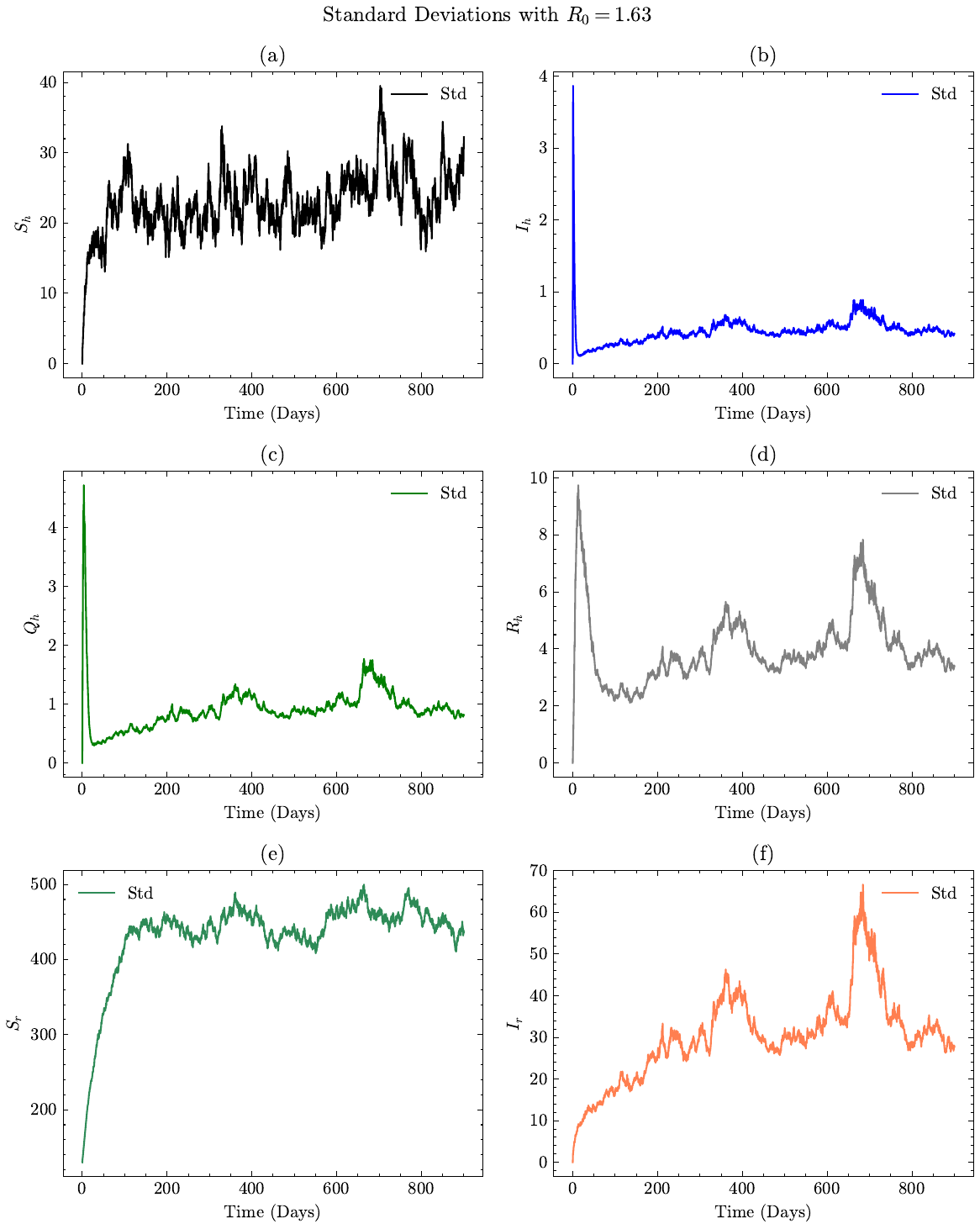}}
\caption{For $R_0>1$, curves show the variability of each compartment humans (a)-(d) and rodents (e)-(f) over multiple simulations, indicating the uncertainty associated with each curve.}
\end{figure}

\section{Speed of growth of time-depending stochastic  model for monkey-pox }
\noindent Several generalized SIR (Susceptible, Infected, and Recovered) models have been studed in terms of their  long-term behaviour. For instance,  the SIR model with perturbed disease transmission coefficients of \cite{Dieu} has the  rates of convergence  determined for $t\to \infty$. It is shown that the rate is not too far from exponential in the sense that the convergence speed has a polynomial form of any degree. In \cite{Jiang}, \cite{Lin2}, the long-time behaviour of densities of the distributions of the stochastic SIR  model are studied.

In this section, we focus on the influence  of  the various system parameters on the asymptotic behaviour of the non-autonomous stochastic model, which corresponds system (\ref{STD1.1}). For this study, the parameters of the model are allowed to depend on time, to incorporate the seasonal variation. Hence the model is of the form 
 \begin{align}\label{SDE_t}
    dS_h(t)&= \left(\theta_h(t)-(1-p)\left(\frac{\eta_1(t)I_r(t)+\eta_2(t)I_h(t)}{N_h(t)}\right)S_h(t)-\mu_h(t)S_h(t)\right)dt\nonumber\\
    & -(1-p)\left(\frac{\sigma_1(t)I_r(t)dB_1(t)+\sigma_2(t)I_h(t)dB_2(t)}{N_h(t)}\right)S_h(t)+\sigma_3(t)S_h(t)dB_3(t),\nonumber\\
    dI_h(t)&=\left((1-p)\left(\frac{\eta_1(t)I_r(t)+\eta_2(t)I_h(t)}{N_h(t)}\right)S_h(t)-(\mu_h(t)+\delta_h(t)+\zeta(t))I_h(t)\right)dt\nonumber\\
    &+(1-p)\frac{\sigma_2(t)S_h(t)I_h(t)}{N_h(t)}dB_2(t)+\sigma_4(t)I_h(t)dB_4(t),\nonumber\\
   dQ_h(t)&=\left(\zeta(t) I_h(t)-(\mu_h(t)+\gamma_h(t)+(1-\theta(t))\delta_h(t))Q_h(t)\right)dt+\sigma_5(t)Q_h(t)dB_5(t),\nonumber\\
    dR_h(t)&=\left(\gamma_h(t)Q_h(t)-\mu_h(t)R_h(t)\right)dt+\sigma_6(t)R_h(t)dB_6(t)\nonumber\\
    dS_r(t)&=\left(\theta_r(t)-\frac{\eta_3(t)S_r(t)I_r(t)}{N_r(t)}-\mu_r(t)S_r(t)\right)dt+\sigma_7(t)S_r(t)dB_7,\nonumber \\
dI_r(t)&=\left(\frac{\eta_3(t)S_r(t)I_r(t)}{N_r(t)}-(\mu_r(t)+\delta_r(t))I_r(t)\right)dt+\sigma_8(t)I_r(t)dB_8(t).
\end{align}

Our goal is to determine which parameters have a significant effect on the growth rate of the solution of this non-autonomous system. We follow similar arguments to  \cite{bul, tym} to achieve the results.

 We stress that determining the rate of growth is essential for establishing the severity of the outbreak. In this context, we consider the model (\ref{STD1.1}) with time-dependent parameters leading to (\ref{SDE_t}). This makes it possible to take into account, e.g., seasonal changes. 
We emphasize that the rate of growth provides important insights into the dynamic nature of the disease, indicating whether it is accelerating or decelerating.

We can still denote the solution of the time-depending stochastic epidemic model  (\ref{SDE_t}),  as $$\mathcal{X}(t)=(S_h(t),I_h(t),Q_h(t),R_h(t),S_r(t),I_r(t)),$$ given an initial condition of $\mathcal{X}(0)\in \mathbb{R}^6_+$ on $t \geq 0$. The following theorem
establish the well-posedness of the system (\ref{SDE_t}).
\begin{theorem}
 Let the coefficients of the stochastic system (\ref{SDE_t}) be continuous functions with upper bounds. Then for every initial condition $\mathcal{X}(0)\in \mathbb{R}^6_+$, the stochastic system (\ref{SDE_t}) admit a unique, a.s. non-negative solution.
\end{theorem}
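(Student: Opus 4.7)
The plan is to mirror the proof of Theorem \ref{Global} step by step, exploiting the fact that the time-dependent coefficients, while no longer constant, are uniformly bounded above by hypothesis. For each fixed $t$, the drift and diffusion coefficients of (\ref{SDE_t}) remain locally Lipschitz in the state variable $\mathcal{X}$ (the structural nonlinearities are the same bilinear/ratio terms as in (\ref{STD1.1})), so the classical SDE theory guarantees a unique local solution $\mathcal{X}(t)$ on $[0,\tau_e)$ up to some explosion time $\tau_e$. As in Theorem \ref{Global}, it suffices to show $\tau_e = \infty$ almost surely; non-negativity will then follow immediately from the construction.

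First I would introduce the same family of stopping times $(\tau_m)_{m \geq m_0}$ tracking when any coordinate of $\mathcal{X}$ exits $(1/m, m)$, and reuse the Lyapunov function
\[
\mathcal{V}(\mathcal{X}) = \sum_{i=1}^{6} (x_i - 1 - \log x_i) \geq 0.
\]
It\^o's formula then gives $d\mathcal{V}(\mathcal{X}(t)) = L\mathcal{V}(\mathcal{X}(t))\,dt + dM(t)$, where $M$ is a local martingale and the operator $L$ has exactly the structural form appearing in Theorem \ref{Global}, but with each constant coefficient replaced by its time-dependent analogue. The crucial step is to bound $L\mathcal{V}(\mathcal{X}(t))$ by a constant \emph{uniformly in} $t$: I would discard the nonpositive contributions and dominate the positive ones by the suprema $\bar{\theta}_h := \sup_t \theta_h(t)$, $\bar{\eta}_i := \sup_t \eta_i(t)$, $\bar{\sigma}_i^2 := \sup_t \sigma_i^2(t)$, and similarly for the remaining parameters. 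Combined with the ratio bounds $S_h/N_h \leq 1$, $I_h/N_h \leq 1$, $S_r/N_r \leq 1$ and assumption (\ref{HR}) (which I carry over to the non-autonomous setting, giving $I_r(t)/N_h(t) \leq \bar{k}$), this yields $L\mathcal{V}(\mathcal{X}(t)) \leq \tilde{K}$ for an explicit constant $\tilde{K}$ that depends only on the coefficient suprema and on $\bar{k}$.

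With this uniform drift bound, the remainder of the proof is verbatim that of Theorem \ref{Global}: integrating up to $\tau_m \wedge T$, taking expectations to eliminate the martingale contribution, and using
\[
\mathcal{V}(\mathcal{X}(\tau_m \wedge T)) \geq (m-1-\log m) \wedge \bigl(\tfrac{1}{m} - 1 - \log \tfrac{1}{m}\bigr)
\]
on $\{\tau_m \leq T\}$, one obtains $\varepsilon \cdot \bigl((m-1-\log m) \wedge (\tfrac{1}{m}-1-\log \tfrac{1}{m})\bigr) \leq \mathcal{V}(\mathcal{X}(0)) + \tilde{K} T$, which contradicts $\mathrm{P}\{\tau_\infty \leq T\} > \varepsilon$ as $m \to \infty$. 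Hence $\tau_\infty = \infty$ a.s., and non-negativity follows since $\mathcal{X}(t) \in (0,m)^6$ on $[0,\tau_m)$ and $\tau_m \nearrow \infty$ a.s.

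The main obstacle is not conceptual but organisational: one must verify that \emph{every} time-dependent coefficient appearing in $L\mathcal{V}$, including the quadratic-variation terms $\tfrac{1}{2}\sigma_i^2(t)$ and the mixed quadratic ratios such as $(1-p)^2\sigma_1^2(t) I_r^2(t)/(2 N_h^2(t))$, can be dominated by its supremum without disturbing the sign structure that made the autonomous estimate work. This is precisely where the continuity-plus-upper-bound hypothesis is used in an essential way: continuity gives the predictability required for the stochastic integrals, while the uniform upper bounds ensure that $\tilde{K}$ is a genuine constant rather than a function of $t$ that could be unbounded on $[0,T]$.
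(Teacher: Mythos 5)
Your proposal is correct and matches the paper's intent exactly: the paper's own proof is a one-line reference to ``similar arguments as in the proof of Theorem \ref{Global}'', and your write-up simply makes explicit what that entails, namely replacing each constant coefficient in the bound for $L\mathcal{V}$ by its supremum so that the constant $\tilde{K}$ remains uniform in $t$. No genuine differences or gaps to report.
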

 \begin{proof} The proof is based on  similar arguments as in the proof of  Theorem \ref{Global}.
 \end{proof}
\noindent To  achieve our goals, we first study the conditions under which the terms involving differentials of Brownian motion vanishes as $t\to \infty$.
 To do this, let us consider a function $F:\R^6_+\to \R_+$. Assume that $F :=F\left(\sum_{i=1}^6x_i\right) $ is an increasing twice continuously differentiable function:  

$1^{\circ}$. $F'=F'_{x_1}=F'_{x_2}=F'_{x_3}=F'_{x_4}=F'_{x_5}=F'_{x_6}$;

$2^{\circ}$. $F''=F''_{x_1x_1}=F''_{x_2x_2}=F''_{x_3x_3}=F''_{x_4x_4}=F''_{x_5x_5}=F''_{x_6x_6}$,

with $F'_{x_k}=\frac{\partial F\left(\sum_{i=1}^6x_i\right)}{\partial x_k}$ and $F''_{x_kx_k}=\frac{\partial^2 F\left(\sum_{i=1}^6x_i\right)}{\partial {x_k}^2}$  for $k=1,...,6$.

\noindent Let $\sigma_j,$ $j=3,...,8$ be continuous positive functions. Denote
$$\mathcal{I}(t)=\int_0^t F'(\mathcal{K}(s))\cdot\left[\sigma_3(s)S_h(s)dB_3(s)+\sigma_4(s)I_h(s)dB_4(s)+\sigma_5(s)Q_h(s)dB_5(s)\right.$$
$$\left.+\sigma_6(s)R_h(s)dB_6(s)+\sigma_7(s)S_r(s)dB_7(s))+\sigma_8(s)I_r(s)dB_8(s)\right],$$
where $\mathcal{K}$ such as in (\ref{K}), and we get the following result. 

\begin{theorem}\label{ti} Let $\left\{ B_j(t): t\geq0   \right\}, j=3,4,5,6,7,8,$ be independent Brownian processes, $\sigma_j,$ $j=3,...,8$ be continuous positive functions, and for  $i=1,...,6, $ function $x_iF'\left(\sum_{i=1}^6x_i\right)$, $x=(x_1,..., x_6)\in \R^6, $ be  bounded.  Assume 
\begin{equation}\label{Int_s}
  \sum _{n=1}^\infty \frac{1}{2^{2n}}\int_0^{2^{n+1}} \Xi(s) \,ds<\infty,  
\end{equation} 
where $\Xi(t)= max\{\sigma_j(t),\, t\geq0,\, j=3,4,5,6,7,8\}$. 
Then
\begin{equation}\label{dopt}
\underset{ t\to \infty}{\lim} \frac {\mathcal{I}(t)}{t} =0 \,\,\,\textit{a.s.}
\end{equation}
\end{theorem}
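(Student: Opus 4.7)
The plan is to realize $\mathcal{I}$ as a continuous $L^2$-martingale, bound its quadratic variation using the assumed boundedness of $x_iF'(\sum_i x_i)$, and then exhaust the time axis by the dyadic intervals $[2^n,2^{n+1}]$, combining Doob's maximal inequality with the Borel--Cantelli lemma to convert hypothesis (\ref{Int_s}) into the desired conclusion (\ref{dopt}). This follows the spirit of the arguments cited in \cite{bul,tym}.

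First I would verify the martingale framework. Since $B_3,\ldots,B_8$ are independent standard Brownian motions and each $\sigma_j$ is continuous, and since the hypothesis ``$x_iF'(\sum_i x_i)$ bounded'' yields a constant $M>0$ with $F'(\mathcal{K}(s))X_j(s)\leq M$ for every compartment $X_j\in\{S_h,I_h,Q_h,R_h,S_r,I_r\}$ and every $s\geq 0$, the integrands in $\mathcal{I}$ are uniformly bounded on finite intervals, hence square-integrable. Thus $\mathcal{I}$ is a continuous square-integrable martingale with
\begin{equation*}
\langle\mathcal{I}\rangle_t=\int_0^t [F'(\mathcal{K}(s))]^2\bigl(\sigma_3^2 S_h^2+\sigma_4^2 I_h^2+\sigma_5^2 Q_h^2+\sigma_6^2 R_h^2+\sigma_7^2 S_r^2+\sigma_8^2 I_r^2\bigr)(s)\,ds\leq 6M^2\int_0^t \Xi(s)\,ds,
\end{equation*}
where $\Xi(s)$ is read as $\max_{j}\sigma_j^2(s)$, consistent with the quantity appearing in (\ref{Int_s}).

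Next, for each integer $n\geq 1$, I would apply Doob's $L^2$-maximal inequality on $[0,2^{n+1}]$ and the It\^o isometry to obtain, for any $\varepsilon>0$,
\begin{equation*}
\mathrm{P}\!\left(\sup_{0\leq t\leq 2^{n+1}}|\mathcal{I}(t)|>\varepsilon\,2^n\right)\leq\frac{4\,\mathrm{E}\langle\mathcal{I}\rangle_{2^{n+1}}}{\varepsilon^2\,2^{2n}}\leq\frac{24M^2}{\varepsilon^2}\cdot\frac{1}{2^{2n}}\int_0^{2^{n+1}}\Xi(s)\,ds.
\end{equation*}
Summing over $n$ and invoking the summability assumption (\ref{Int_s}), the right-hand side forms a convergent series, so by the Borel--Cantelli lemma the event $\{\sup_{t\leq 2^{n+1}}|\mathcal{I}(t)|>\varepsilon\,2^n\}$ occurs only finitely often, almost surely.

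To conclude, for $t$ large pick the unique $n$ with $2^n\leq t<2^{n+1}$, so that
\begin{equation*}
\frac{|\mathcal{I}(t)|}{t}\leq\frac{1}{2^n}\sup_{0\leq s\leq 2^{n+1}}|\mathcal{I}(s)|\leq\varepsilon
\end{equation*}
eventually almost surely; letting $\varepsilon$ run through a countable sequence $\varepsilon_k\downarrow 0$ yields (\ref{dopt}). I expect the main obstacle to be not the dyadic-exhaustion scheme itself, which is standard, but the careful bookkeeping that ensures the uniform bound $F'(\mathcal{K})X_j\leq M$ is applied consistently across all six compartments, and that the interpretation of $\Xi$ in (\ref{Int_s}) is matched to the $\sigma_j^2$ that arise naturally in $\langle\mathcal{I}\rangle$, so that the constants in the Doob estimate line up cleanly with the convergent series hypothesized.
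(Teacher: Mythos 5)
Your proposal is correct and follows essentially the same route as the paper: dyadic blocks $[2^n,2^{n+1}]$, Doob's maximal inequality with the quadratic variation bounded via the boundedness of $x_iF'\left(\sum_{i=1}^6x_i\right)$, and the summability hypothesis (\ref{Int_s}) to force the tail to vanish (the paper phrases the last step as a vanishing tail of a convergent series rather than invoking Borel--Cantelli by name, but it is the same argument). Your treatment of $\mathcal{I}$ as a single martingale with cross terms vanishing by independence is marginally cleaner than the paper's six-fold union bound with $\epsilon/6$, and your remark that $\Xi$ must be read as $\max_j\sigma_j^2$ for the constants to line up correctly identifies a genuine sloppiness in the paper's statement.
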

\begin{proof}
Let us fix $\epsilon> 0$. Using the Doob's inequality
\begin{eqnarray}  &&\mathrm{P}\left\{ \underset{k\geq n}{\sup} \frac{|\mathcal{I}(2^ k)|}{2^{ k}} >\epsilon \right\}\leq \mathrm{P}\left\{ \underset{t\geq 2^n}{\sup} \frac {|\mathcal{I}(t)|}{t} >\epsilon \right\}\leq
 \sum_{k=n}^{\infty} \mathrm{P}\left\{ \underset{2^{k}\leq t\leq 2^{k+1}}{\sup} \frac {|\mathcal{I}(t)|}{t} >\epsilon \right\}\cr\cr&&
\leq \sum_{k=n}^{\infty} \mathrm{P}\left\{ \underset{2^{k}\leq t\leq 2^{k+1}}{\sup} \left(\frac {1}{t} \Big|\int_0^{t}F'(\mathcal{K}(s))\sigma_3(s)S_h(s)dB_3(s)\Big|\right)>\frac{\epsilon}{6} \right\}\cr\cr&&
+\sum_{k=n}^{\infty} \mathrm{P}\left\{ \underset{2^{k}\leq t\leq 2^{k+1}}{\sup} \left(\frac {1}{t} \Big|\int_0^{t} F'(\mathcal{K}(s))\sigma_4(s)I_h(s)dB_4(s) \Big|\right)>\frac{\epsilon}{6} \right\}\cr\cr&&+\sum_{k=n}^{\infty} \mathrm{P}\left\{ \underset{2^{k}\leq t\leq 2^{k+1}}{\sup} \left(\frac {1}{t} \Big|\int_0^{t} F'(\mathcal{K}(s))\sigma_5(s)Q_h(s)dB_5(s) \Big|\right)>\frac{\epsilon}{6} \right\}\cr\cr&&+\sum_{k=n}^{\infty} \mathrm{P}\left\{ \underset{2^{k}\leq t\leq 2^{k+1}}{\sup} \left(\frac {1}{t} \Big|\int_0^{t} F'(\mathcal{K}(s))\sigma_6(s)R_h(s)dB_6(s) \Big|\right)>\frac{\epsilon}{6} \right\}\cr\cr&&+\sum_{k=n}^{\infty} \mathrm{P}\left\{\underset{2^{k}\leq t\leq 2^{k+1}}{\sup} \left(\frac {1}{t} \Big|\int_0^{t} F'(\mathcal{K}(s))\sigma_7(s)S_r(s)dB_7(s)ds \Big|\right)>\frac{\epsilon}{6} \right\}\cr\cr&&+\sum_{k=n}^{\infty} \mathrm{P}\left\{\underset{2^{k}\leq t\leq 2^{k+1}}{\sup} \left(\frac {1}{t} \Big|\int_0^{t} F'(\mathcal{K}(s))\sigma_8(s)I_r(s)dB_8(s) \Big|\right)>\frac{\epsilon}{6} \right\}\cr\cr&&\leq \frac{144}{\epsilon^2} \sum _{k=n}^\infty \frac{1}{2^{2 k}}\int_0^{2^{k+1}}\mathrm{E} \Big(F'(\mathcal{K}(s))^2\cdot(\sigma^2_3(s)S^2_h(s)+\sigma^2_4(s)I^2_h(s)+\sigma^2_5(s)Q^2_h(s)\cr\cr&&+\sigma^2_6(s)R^2_h(s)+\sigma^2_7S^2_r(s)+\sigma^2_8I^2_r(s)\Big)ds\leq\cr\cr&&\leq \frac{144}{\epsilon^2} \sum _{k=n}^\infty \frac{1}{2^{2 k}}\int_0^{2^{k+1}} \Xi (s)\cdot\mathrm{E}\Big(F'(\mathcal{K}(s))^2\cdot(S^2_h(s)+I^2_h(s)+Q^2_h(s)\cr\cr&&+R^2_h(s)+S^2_r(s)+I^2_r(s)\Big)ds\leq\frac{144}{\epsilon^2} \sum _{k=n}^\infty \frac{1}{2^{2 k}}\int_0^{2^{k+1}} \Xi (s)\cdot 6\tilde{C}^2_1 ds\cr\cr&&=\frac{864\cdot \tilde{C}^2_1}{\epsilon^2}  \sum _{k=n}^\infty \frac{1}{2^{2 k}}\int_0^{2^{k+1}} \Xi (s)ds,\end{eqnarray}
where $\tilde{C}_1=\underset{x_i}{\sup}\left\{x_iF'\left(\sum_{i=1}^6x_i\right)\right\}$.
Thus 
$$\mathrm{P}\left\{ \underset{k\geq n}{\sup} \frac{ \Big|\mathcal{I}(2^ k) \Big|}{2^{ k}} >\epsilon \right\}\leq \frac{864\cdot \tilde{C}^2_1}{\epsilon^2}  \sum _{k=n}^\infty \frac{1}{2^{2 k}}\int_0^{2^{k+1}} \Xi (s)ds$$
for any $n\geq 0$ and $\epsilon>0$.
\noindent By assumption, we have  that   $$\displaystyle  \sum _{k=n}^\infty \frac{1}{2^{2 k}}\int_0^{2^{k+1}} \Xi(s) ds\to \,\,0, \,\, \textnormal{as} \,\, n\to\infty.$$ 
Since $\epsilon>0$ is arbitrary, we have the relations
$$\mathrm{P}\left\{ \underset{ t\to \infty}{\lim \sup} \frac { \Big|\mathcal{I}(t) \Big|}{t} >0 \right\}=0\Rightarrow \mathrm{P}\left\{ \underset{ t\to \infty}{\lim \sup} \frac { \Big|\mathcal{I}(t) \Big|}{t} =0 \right\}=1.$$
\[\]
Therefore,
$$\underset{ t\to \infty}{\lim} \frac { \Big|\mathcal{I}(t) \Big|}{t} =0 \,\,\,\, \textnormal{a.s.}$$
i.e. relation (\ref{dopt}) holds, and Theorem \ref{ti} is proved.
\end{proof}
\begin{remark}
    For the monkeypox model (\ref{STD1.1}) wich has constant volatilities  $\sigma_i,  i=3,4,5,6,7,8,$ the conditions of the Theorem \ref{ti} are fulfilled automatically. Indeed, if $m_{\sigma}=max\{\sigma_3, \sigma_4, \sigma_5, \sigma_6, \sigma_7, \sigma_8\},$ then 
    $$ \sum _{n=1}^\infty \frac{1}{2^{2n}}\int_0^{2^{n+1}} \Xi(s)\, ds= \sum _{n=1}^\infty \frac{1}{2^{2n}}\int_0^{2^{n+1}} m_{\sigma}\, ds=m_{\sigma}\sum _{n=1}^\infty \frac{1}{2^{n-1}}<\infty.$$
   Hence, we obtain (\ref{dopt}) from  the properties of Brownian motion.
   \end{remark}

\noindent Now let's move to the study of  the extinction of the disease. Consider the operator $L$ which acts on the Lyapunov function $\mathcal{V}:\R^6_+\to \R_+$ given by $\mathcal{V}(\mathcal{X}(t))=F(\mathcal{K}(t))$ have the form
\begin{eqnarray*}
&&L\mathcal{V}(\mathcal{X}(t))=F'(\mathcal{K}(t))\bigg(\theta_h(t)-\mu_h(t)(S_h(t)+I_h(t)+Q_h(t)+R_h(t))\cr\cr&&-\delta_h(t)I_h(t)-(1-\theta(t))\delta_h(t)Q_h(t)+ \theta_r(t)-\mu_r(t)(S_r(t)+I_r(t))-\delta_r(t)I_r(t)\bigg)\cr\cr&&+  
F''(\mathcal{K}(t))\bigg(\sigma^2_3(t)S^2_h(t)+\sigma^2_4(t)I^2_h(t)+\sigma^2_5(t)Q^2_h(t)+\sigma^2_6(t)R^2_h(t)\cr\cr&&+\sigma^2_7(t)S^2_r(t)+\sigma^2_8(t)I^2_h(t)\bigg)\cr\cr&&+\frac{(1-p)^2\sigma^2_1(t)I^2_r(t)S^2_h(t)F''(\mathcal{K}(t))}{N^2_h(t)}+\frac{2(1-p)^2\sigma^2_2(t)I^2_h(t)S^2_h(t)F''(\mathcal{K}(t))}{N^2_h(t)}.
\end{eqnarray*}
The following theorem describes the  parameters that mostly affect the rate of the disease spread.
\begin{theorem}\label{5.3}
    Assume that  all conditions Theorem \ref{ti} hold and suppose
    
    I. $\underset{t\to \infty} \lim \frac{1}{t}\int_0^t\Big|\mathcal{V}'(\mathcal{X}(s))\Big|(\theta_h(s)+ \theta_r(s))ds =a\in[0;+\infty)$ a.s.;
    
    II. $\underset{ t\to \infty}{\lim}\frac{1}{t}\int_0^tH_1(s)ds=b\in(0;+\infty)$ with $H_1(t)=\mu_h(t)\vee\mu_r(t)$; 

III. $\underset{ t\to \infty}{\lim}\frac{1}{t}\int_0^t\left((2-\theta(s))\delta_h(s)+\delta_r(s)\right)ds=c\in[0;+\infty).$

    IV. $\lim\underset{ t\to \infty}{\lim}\frac{1}{t}\int_0^t(\sigma^2_1(s)+2\sigma^2_2(s))ds=d\in[0;+\infty)$;
Then
$$\underset{ t\to \infty}{\lim} \frac {\mathcal{V}(\mathcal{X}(t))}{t}\leq a+3\tilde{C}_1\cdot(2 b+c)+(1-p)^2\cdot \tilde{C}_2\cdot d  \,\,\ \textit{a.s.},$$
where $\tilde{C}_1=\sup\left\{\mathcal{V}' x_1\right\}$, $\tilde{C}_2=\sup\{\mathcal{V}''x^2_1\}$.
   
\end{theorem}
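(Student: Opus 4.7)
The plan is to apply It\^o's formula to $\mathcal{V}(\mathcal{X}(t))=F(\mathcal{K}(t))$ and to study the drift and martingale contributions separately. From It\^o one obtains
$$
\frac{\mathcal{V}(\mathcal{X}(t))}{t}=\frac{\mathcal{V}(\mathcal{X}(0))}{t}+\frac{1}{t}\int_0^t L\mathcal{V}(\mathcal{X}(s))\,ds+\frac{\mathcal{M}(t)}{t},
$$
where $\mathcal{M}(t)$ gathers all the It\^o stochastic integrals. Since $\mathcal{V}(\mathcal{X}(0))/t\to 0$, the statement reduces to a pointwise upper bound on $L\mathcal{V}$ together with $\mathcal{M}(t)/t\to 0$ a.s.

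For the martingale part, I would first remark that when adding $dS_h+dI_h+\dots+dI_r$ to form $d\mathcal{K}$, the two $dB_2$-terms in $dS_h$ and $dI_h$ cancel, so that $dF(\mathcal{K})$ only inherits stochastic integrals driven by $B_1,B_3,B_4,B_5,B_6,B_7,B_8$. The $B_3$--$B_8$ contributions assemble exactly into the process $\mathcal{I}(t)$ of Theorem~\ref{ti}, whose assumptions are in force, hence $\mathcal{I}(t)/t\to 0$ a.s. The leftover $B_1$-term has integrand $(1-p)F'(\mathcal{K}(s))\sigma_1(s)I_r(s)S_h(s)/N_h(s)$, uniformly bounded by $(1-p)\tilde{C}_1\sigma_1(s)$ thanks to $S_h/N_h\leq 1$ and $I_r F'(\mathcal{K})\leq\tilde{C}_1$; the same dyadic Doob argument as in the proof of Theorem~\ref{ti} applies verbatim to this single summand and yields that it, too, divided by $t$ vanishes a.s., so $\mathcal{M}(t)/t\to 0$ a.s.

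The core step is a pointwise upper bound on $L\mathcal{V}(\mathcal{X}(s))$ obtained block by block, using $|x_iF'(\sum x_j)|\leq\tilde{C}_1$, $|x_i^2F''(\sum x_j)|\leq\tilde{C}_2$, $S_h,I_h\leq N_h$, $\mu_h\vee\mu_r=H_1$, and $F'(\mathcal{K})\mathcal{K}\leq 6\tilde{C}_1$. The block $F'(\mathcal{K})(\theta_h+\theta_r)$ is dominated by $|\mathcal{V}'|(\theta_h+\theta_r)$ and, via assumption~I, contributes $a$ in the limit. The blocks involving $\mu_h N_h$ and $\mu_r N_r$ are jointly estimated through $F'(\mathcal{K})H_1\mathcal{K}\leq 6\tilde{C}_1 H_1$, producing $6\tilde{C}_1 b$ by assumption~II. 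The $\delta$-blocks are bounded componentwise by $\tilde{C}_1\bigl((2-\theta)\delta_h+\delta_r\bigr)$ and, with the conservative regrouping used in the statement, feed at most $3\tilde{C}_1 c$ via assumption~III. Finally, the two $F''$-terms carrying $\sigma_1,\sigma_2$ are bounded by $(1-p)^2\tilde{C}_2(\sigma_1^2+2\sigma_2^2)$ after using $S_h/N_h,I_h/N_h\leq 1$ and $I_r^2F''(\mathcal{K}),I_h^2F''(\mathcal{K})\leq\tilde{C}_2$, giving $(1-p)^2\tilde{C}_2 d$ by assumption~IV.

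The hard part is the careful sign bookkeeping: the drift contributions from $\mu_h,\mu_r,\delta_h,\delta_r$ are \emph{negative} in $L\mathcal{V}$, so replacing them by absolute values introduces slack that must be reconciled with the numerical constants appearing in $3\tilde{C}_1(2b+c)$; this is what forces the slightly loose regroupings $\mu_h N_h+\mu_r N_r\leq H_1\mathcal{K}$ together with $F'(\mathcal{K})\mathcal{K}\leq 6\tilde{C}_1$, and the componentwise bound on the $\delta$-block. A parallel subtlety is that the $F''$-blocks with $\sigma_3,\dots,\sigma_8$ do not appear on the right-hand side of the claimed estimate, which requires either a sign convention on $F$ (e.g.\ concavity) that makes them non-positive, or absorbing them into the slack already introduced. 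Once these bookkeeping issues are settled, taking $\limsup_{t\to\infty}$ in the integrated inequality and combining with $\mathcal{M}(t)/t\to 0$ a.s.\ closes the argument.
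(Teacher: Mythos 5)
Your proposal follows essentially the same route as the paper's proof: It\^o's formula for $\mathcal{V}=F(\mathcal{K})$, the cancellation of the $B_2$-terms in $dS_h+dI_h$, Theorem \ref{ti} for the $B_3$--$B_8$ integrals, a separate treatment of the residual $B_1$ integral, and the same block-by-block drift estimates yielding $a+6\tilde{C}_1 b+3\tilde{C}_1 c+(1-p)^2\tilde{C}_2 d$. The one point where you hedge --- the second-order block $F''(\mathcal{K})\bigl(\sigma_3^2S_h^2+\dots+\sigma_8^2I_r^2\bigr)$, which is absent from the stated bound --- is disposed of in the paper not by a concavity or sign assumption on $F''$ but by bounding it by $6\tilde{C}_2\int_0^t\Xi(s)\,ds$ and asserting that this is $o(t)$ as a consequence of the summability hypothesis (\ref{Int_s}) inherited from Theorem \ref{ti}; your two proposed resolutions are at least as defensible as that step. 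Your dyadic Doob argument for the leftover $B_1$ integral (which does go through under assumption IV) is in fact more careful than the paper's, which simply factors the bounded integrand out and invokes the law of large numbers for $B_1$ itself.
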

\begin{proof}
Applying It\^o formula to $\mathcal{V},$   we obtain
 \begin{eqnarray*}  
d(\mathcal{V}(\mathcal{X}(t))) &= & L \mathcal{V}(\mathcal{X}(t))dt+\mathcal{V}'(\mathcal{X}(t))\bigg(\frac{(p-1)\sigma_1I_r(t)S_h(t)}{N_h(t)}dB_1(t)\cr\cr &&+\sigma_3(t)S_h(t)dB_3(t)+\sigma_4(t)I_h(t)dB_4(t)+\sigma_5(t)Q_h(t)dB_5(t)\cr\cr && +\sigma_6(t)R_h(t)dB_6(t)+\sigma_7(t)S_r(t)dB_7(t))+\sigma_8(t)I_r(t)dB_8(t)\bigg),\end{eqnarray*}
or in integral form
$$\mathcal{V}(\mathcal{X}(t))=\mathcal{V}(\mathcal{X}(0))+\int_0^t L \mathcal{V}(\mathcal{X}(s))ds+(p-1)\sigma_1\int_0^t\frac{\mathcal{V}'(\mathcal{X}(s))I_r(s)S_h(s)}{N_h(s)}dB_1(s) +\mathcal{I}(t).$$
By Theorem \ref{ti}, we get 
$$\underset{ t\to \infty}{\lim} \frac {\mathcal{I}(t)}{t} =0 \,\,\,
\textit{a.s.}$$
Since $S_h<N_h$, and $x_i\mathcal{V}'$, $i=1,...,6, $ is bounded, then  the strong low of large number  gives
$$\underset{ t\to \infty}{\lim} \frac{(p-1)\sigma_1}{t}\int_0^t\frac{\mathcal{V}'(\mathcal{X}(s))I_r(s)S_h(s)}{N_h(s)}dB_1(s)= (p-1)\sigma_1\tilde{C}_1\underset{ t\to \infty}{\lim} \frac{B_1(t)}{t}=0.$$
While (\ref{Int_s}) holds, we have
 \begin{eqnarray*}  &&\mathcal{I}_1(t)=\int_0^t\mathcal{V}''(\mathcal{X}(s))(\sigma^2_3(s)S^2_h(s)+\sigma^2_4(s)I^2_h(s)+\sigma^2_5(s)Q^2_h(s) \cr\cr &&+\sigma^2_6(s)R^2_h(s)+\sigma^2_7(s)S^2_r(s)+\sigma^2_8(s)I^2_r(s))ds\cr\cr &&\leq \int_0^t\Xi(s)\cdot F''(\mathcal{K}(s))(S^2_h(s)+I^2_h(s)+Q^2_h(s)+R^2_h(s)+S^2_r(s)+I^2_r(s))ds\cr\cr && \leq 6\tilde{C}_2\int_0^t\Xi(s)ds. \end{eqnarray*}

\noindent Therefore
$\displaystyle \underset{ t\to \infty}{\lim}\frac{\mathcal{I}_1(t)}{t}=0 \,\,\textnormal{a.s.} $ Since $\dfrac{S^2_h(s)\mathcal{V}''(\mathcal{X}(s))}{N^2_h(s)}\leq \mathcal{V}''(\mathcal{X}(s))$. It is easy to see, that
\begin{eqnarray*}  &&\mathcal{I}_2(t)=(1-p)^2\int_0^t(\sigma^2_1(s)I^2_r(s)+2\sigma^2_2(s)I^2_h(s))\frac{S^2_h(s)\mathcal{V}''(\mathcal{X}(s))}{N^2_h(s)}ds\cr\cr &&\leq (1-p)^2\cdot \tilde{C}_2\int_0^t(\sigma^2_1(s)+2\sigma^2_2(s))ds.
\end{eqnarray*}
Next estimation, 
 \begin{eqnarray*}  &&
\mathcal{I}_3(t)=\Big|\int_0^t\mathcal{V}'(\mathcal{X}(s))\bigg(\theta_h(s)+ \theta_r(s)-\mu_h(s)(S_h(s)+I_h(s)+Q_h(s)+R_h(s))\cr\cr&&-\mu_r(s)(S_r(s)+I_r(s))-\delta_h(s)I_h(s)-(1-\theta(s))\delta_h(s)Q_h(s)-\delta_r(s)I_r(s)\bigg)ds\Big|\cr\cr&&\leq \int_0^t\Big|\mathcal{V}'(\mathcal{X}(s))\Big|(\theta_h(s)+ \theta_r(s))ds+6\tilde{C}_1\int_0^tH_1(s)ds\cr\cr&&+3\tilde{C}_1\int_0^t\left(\delta_h(s)+(1-\theta(s))\delta_h(s)+\delta_r(s)\right)ds.
\end{eqnarray*}
Here $H_1(t)=\mu_h(t)\vee\mu_r(t)$. Finally,
\begin{eqnarray*}  
 \underset{ t\to \infty}{\lim} \frac {\mathcal{V}(\mathcal{X}(t))}{t}&=&\underset{ t\to \infty}{\lim} \frac {\mathcal{V}(\mathcal{X}(0))}{t}+\underset{ t\to \infty}{\lim} \frac {1}{t}\int_0^t L \mathcal{V}(\mathcal{X}(s))ds\cr\cr&& +\underset{ t\to \infty}{\lim} \frac{(p-1)\sigma_1}{t}\int_0^t\frac{\mathcal{V}'(\mathcal{X}(s))I_r(s)S_h(s)}{N_h(s)}dB_1(s)+\underset{ t\to \infty}{\lim} \frac { \mathcal{I}(t)}{t}\cr\cr&& = \underset{ t\to \infty}{\lim}\frac{1}{t}\int_0^t\Big|\mathcal{V}'(\mathcal{X}(s))\Big|(\theta_h(s)+ \theta_r(s))ds+\underset{ t\to \infty}{\lim}\frac{6\tilde{C}_1}{t}\int_0^tH_1(s)ds\cr\cr&&+(1-p)^2\tilde{C}_2\cdot\underset{ t\to \infty}{\lim}\frac{1}{t}\int_0^t(\sigma^2_1(s)+2\sigma^2_2(s))ds\cr\cr&&+3\tilde{C}_1\cdot\underset{ t\to \infty}{\lim}\frac{1}{t}\int_0^t\left(\delta_h(s)+(1-\theta(s))\delta_h(s)+\delta_r(s)\right)ds\cr\cr&&\leq a+6\tilde{C}_1\cdot b+(1-p)^2\cdot \tilde{C}_2\cdot d+3\tilde{C}_1 \cdot c.\end{eqnarray*}
Thus, Theorem \ref{5.3} is proved.
\end{proof}
\begin{remark} The verification of assumption I. is quite tedious, as it depends on the stochastic processes. Therefore, it is advisable to use the following sufficient conditions:
$\underset{x} \sup\Big|\mathcal{V}'(x)\Big|<\infty$ and $\underset{ t\to \infty}{\lim} \frac{1}{t}\int_0^t(\theta_h(s)+ \theta_r(s))ds =A\in[0;+\infty)$ a.s.
\end{remark}
\begin{corollary}\label{5_5}Let $I_h, Q_h, I_r$ satisfy the system (\ref{SDE_t}).  Assume 
\begin{equation*}\label{Int_s-1}
  \sum _{n=1}^\infty \frac{1}{2^{2n}}\int_0^{2^{n+1}} \tilde{\Xi}(s) ds<\infty,  
\end{equation*} 
where $\tilde{\Xi}(t)= max\{\sigma_i(t), t\geq0, i=2,4,5,6,\}$. If 
\begin{equation}\label{Int_s-2}\underset{t\to\infty}\lim\sup\frac{1}{t}\int_0^t\left((1-p)(\eta_1(s)+\eta_2(s))+\eta_3(s)\right)ds<\underset{t\to\infty}\lim\sup\frac{1}{t}\int_0^t(M_1(s)+M_2(s))ds,\end{equation}
for $M_1(t)= \min\{\mu_h(t),\mu_r(t)\}$ and $M_2(t)=\min\{\delta_h(t),\delta_r(t)\}$, $t\geq0$, then the classes $I_h, Q_h, I_r$ will die out exponentially with probability one, i.e.,
\begin{eqnarray*}  
 \underset{ t\to \infty}{\lim \sup} \frac {\ln (I_h(t)+Q_h(t)+I_r(t))}{t}&<&0\,\, \textnormal{a.s.}
 \end{eqnarray*}
\end{corollary}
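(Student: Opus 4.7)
The plan is to adapt the logarithmic Lyapunov argument of Theorem \ref{Exti} to the time-dependent setting of (\ref{SDE_t}), invoking the dyadic Doob estimate from Theorem \ref{ti} to handle the vanishing of the stochastic integrals.

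First, I would apply It\^o's formula to $\mathcal{U}(t)=\ln(I_h(t)+Q_h(t)+I_r(t))$. The resulting drift $L\mathcal{U}(t)$ has exactly the form (\ref{luest}) with every parameter replaced by its time-dependent counterpart, and the martingale part combines four stochastic integrals driven by $B_2$, $B_4$, $B_5$, $B_8$ with integrands of the form $\sigma_i(t)X_i(t)/(I_h(t)+Q_h(t)+I_r(t))$, where $X_i$ is either the corresponding compartment or the bounded quantity $S_h I_h/N_h$. Using the bounds $S_h\le N_h$ and $S_r\le N_r$ and dropping the non-positive It\^o correction terms, one obtains the pointwise estimate
$$L\mathcal{U}(t)\le (1-p)(\eta_1(t)+\eta_2(t))+\eta_3(t)-(M_1(t)+M_2(t)),$$
in direct parallel with the bound established in the proof of Theorem \ref{Exti}.

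Next, I would integrate on $[0,t]$, divide by $t$, and take $\limsup$ as $t\to\infty$. The hypothesis (\ref{Int_s-2}) immediately forces the averaged deterministic drift to have a strictly negative $\limsup$. The main obstacle is controlling the four stochastic integrals on the right-hand side: I need to show that each, divided by $t$, tends to zero a.s. The key structural observation is that every integrand is pathwise dominated in absolute value by $\tilde\Xi(s)$, because the ratios $X_i(s)/(I_h(s)+Q_h(s)+I_r(s))$ and $S_h(s)/N_h(s)$ are all bounded by $1$. With that bound in hand, the dyadic Doob maximal inequality argument inside the proof of Theorem \ref{ti} transfers verbatim, the boundedness of the ratio $X_i/(I_h+Q_h+I_r)$ playing the role of the boundedness of $x_iF'$ in that proof, and the summability hypothesis on $\tilde\Xi$ closes the estimate.

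Combining the two pieces gives
$$\limsup_{t\to\infty}\frac{\mathcal{U}(t)}{t}\le\limsup_{t\to\infty}\frac{1}{t}\int_0^t\!\bigl[(1-p)(\eta_1(s)+\eta_2(s))+\eta_3(s)-M_1(s)-M_2(s)\bigr]\,ds<0\quad\text{a.s.},$$
which is exactly the claimed exponential extinction of $I_h+Q_h+I_r$ with probability one.
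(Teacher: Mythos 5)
Your proposal is correct and follows exactly the route the paper intends: the paper's own proof of Corollary \ref{5_5} is the one-line remark that the result ``follows from the It\^o formula for $\ln(I_h+Q_h+I_r)$ and Theorem \ref{ti}'', and you have filled in precisely those two steps --- the time-dependent analogue of the drift bound from the proof of Theorem \ref{Exti}, and the dyadic Doob maximal-inequality argument with the ratios $X_i/(I_h+Q_h+I_r)\le 1$ playing the role of the bounded $x_iF'$. The only discrepancy is inherited from the paper's statement rather than introduced by you: the martingale terms involve $B_2,B_4,B_5,B_8$, so the summability hypothesis should cover $\sigma_8$ rather than $\sigma_6$ as written in the definition of $\tilde{\Xi}$.
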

\begin{proof} The proof follows from the It\^o formula for $\ln (I_h+Q_h+I_r)$ and  Theorem \ref{ti}.
\end{proof}
\begin{remark}The result of Corollary \ref{5_5}  generalizes the result of Theorem \ref{Exti} for the monkeypox model (\ref{STD1.1}) where parameters are in the form of constants. It should be noted that assumption (\ref{Int_s-2}) corresponds to $\mathcal{R}_0<1$ in Theorem \ref{Exti} in cases of constant. 
\end{remark}
The rate of expansion shows how rapidly the monkeypox is spreading. The growth rate over time is a good indicator of the severity and speed of the epidemic.
 A faster rate of increase indicates that the population's health is declining more quickly, putting more strain on the healthcare systems.

\section{Conclusion and Final Comments}
\noindent In the present research work, we have introduced and  analyzed a  stochastic monkeypox epidemic model with
cross-infection between animals and humans. Our results discuss the well-posedness of the model to secure     that the simulations and prediction will be meaningfull. Specifically, we have seen
global existence, positivity and uniqueness of solution. 
Subsequently, we have expounded results for stochastic ultimate boundedness and permanence and results related to the long-run behavior via exploring asymptotic properties of the model in a time-depended variation. 

By anticipating and preparing for possible outbreaks, public health professionals may utilize these forecasts to lessen the burden of the disease with early and focused actions. The compartment modeling technique can incorporate compartments for immune people to better understand the fading and development of immunity over time and the immunity of the population. It may also be used to find possible amplification hosts, such rodents in the case of monkeypox, and reservoirs.
This information is critical for understanding the ecology of the disease, directing monitoring efforts, and putting control measures in place. Randomness in the spread of disease given by the uncertain environment is taken into account. This facilitates the creation of probabilistic forecasts, increases prediction reliability, and directs decision-making in the face of uncertainty.

\section*{Statements and Declarations}
\noindent \textbf{Funding.} The present research is carried out within the frame and support of the ToppForsk project nr. 274410 of the Research Council of Norway with the title STORM: Stochastics for Time-Space Risk Models, and the project "Dynamics for Transmission
of Monkeypox: Biological and Probabilistic
Behaviour" supported by the Higher Education Commission of Pakistan, and supported by the MSCA4Ukraine grant (AvH ID:1233636), which is funded by the European Commission.

\noindent \textbf{Acknowledgements.} The first author would like to thank the University of Swat, KP Pakistan, for providing the opportunity to complete nine months postdoc with stay at the University of Oslo, Norway, and CIMPA, Nice, France to have provided the grounds to meet and form the present scientific collaboration.

\end{document}